\documentclass[12pt]{article}
\usepackage{standalone}
\usepackage{amsmath, amssymb, amsthm}
\usepackage{indentfirst}
\usepackage{fullpage}
\usepackage{tikz}
\usepackage{bm}
\usepackage{float}
\usepackage{hyperref}
\usepackage{nicefrac}
\usetikzlibrary{shapes.geometric, shapes.misc, arrows, arrows.meta, decorations.markings, decorations.pathreplacing, calc, positioning}

\definecolor{Xcolor}{HTML}{FF0000}
\definecolor{Ycolor}{HTML}{00FFFF}

\DeclareMathOperator{\diam}{diam}

\DeclareMathOperator{\dis}{dis}
\DeclareMathOperator{\codis}{codis}

\newcommand\restr[2]{{
  \left.\kern-\nulldelimiterspace 
  #1 
  \vphantom{\big|} 
  \right|_{#2} 
  }}

\newcommand{\dH}{d_{\mathrm{H}}}
\newcommand{\dGH}{d_{\mathrm{GH}}}
\newcommand{\dmGH}{\widehat{d}_{\mathrm{GH}}}
\DeclareMathOperator*{\defeq}{\buildrel \mathrm{def}\over =}

\newtheorem{theorem}{Theorem}

\newtheorem{lemma}{Lemma}
\newtheorem{claim}{Claim}
\newtheorem*{corollary}{Corollary}
\theoremstyle{definition}

\theoremstyle{remark}
\newtheorem*{remark}{Remark}

\title{Lipschitz (non-)equivalence of the Gromov--Hausdorff distances, including on ultrametric spaces}
\author{Vladyslav Oles\thanks{National Center for Computational Sciences, Oak Ridge National Laboratory (\texttt{olesv@ornl.gov})} \and Kevin R. Vixie\thanks{Department of Mathematics and Statistics and Department of Physics and Astronomy, Washington State University (\texttt{krvixie@pm.me}, \url{https://iscilabs.com/})}}
\date{}
\begin{document}

\maketitle

\begin{abstract}
The Gromov–Hausdorff distance measures the difference in shape between compact metric spaces. While even approximating the distance up to any practical factor poses an NP-hard problem, its relaxations have proven useful for the problems in geometric data analysis, including on point clouds, manifolds, and graphs. We investigate the modified Gromov--Hausdorff distance, a relaxation of the standard distance that retains many of its theoretical properties, which includes their topological equivalence on a rich set of families of metric spaces. We show that the two distances are Lipschitz-equivalent on any family of metric spaces of uniformly bounded size, but that the equivalence does not hold in general, not even when the distances are restricted to ultrametric spaces. We additionally prove that the standard and the modified Gromov--Hausdorff distances are either equal or within a factor of 2 from each other when taken to a regular simplex, which connects the relaxation to some well-known problems in discrete geometry.

\end{abstract}

\section*{List of notation}
\def\arraystretch{1.5}
\vspace{-3mm}
\begin{table}[H]
\begin{tabular}{rp{0.8\textwidth}}
$|X|$ & the cardinality of set $X$\\
$X \times Y$ & Cartesian product of sets $X$ and $Y$\\
$d_X$ & the distance function of metric space $X$\\
$f^{-1}$ & the inverse of mapping $f$\\
$f \circ g$ & the composition of mappings $f$ and $g$ that applies $f$ to the output of $g$\\
$f(X)$ & the image of set $X$ under mapping $f$\\
$\dGH$ & the Gromov--Hausdorff distance\\
$\dmGH$ & the modified Gromov--Hausdorff distance
\end{tabular}
\end{table}
\def\arraystretch{1}

\section{Introduction}
The Gromov--Hausdorff (GH) distance measures the difference in shape between compact metric spaces. It was proposed by Gromov in the 1980s to study convergence of manifolds, and has appeared since then in many areas of modern mathematics, most notably Riemannian geometry and computational topology.

Computing the GH distance between metric spaces $X$ and $Y$ amounts to minimizing the distortion of their distances incurred by the choice of a pair $(f:X\to Y, g:Y\to X)$, which assigns to each point in $X$ and $Y$ at least one counterpart in the other space. While this formulation is resemblant of the Lawler's quadratic bottleneck assignment problem (QBAP), the GH distance is more expressive than QBAP due to the fact that the $(f, g)$-induced assignments do not need to be bijective. For example, Figure \ref{fig:uneven_sampling} shows that even when $|X| = |Y|$, non-bijective assignments can achieve smaller distortion than the bijective ones, making the Gromov--Hausdorff distance less sensitive to e.g. uneven sampling of points in $X$ and $Y$. Similarly to the NP-hard QBAP, computing the GH distance poses an intractable combinatorial optimization problem \cite{chazal2009gromov, memoli2007use, schmiedl2017computational}. As a consequence, the distance has been studied mainly from theoretical perspective, in particular for the topology it induces on the space of compact metric spaces. However, in the recent years a lot of attention has been put towards computational aspects of the GH distance, in particular in the context of shape matching and data analysis.

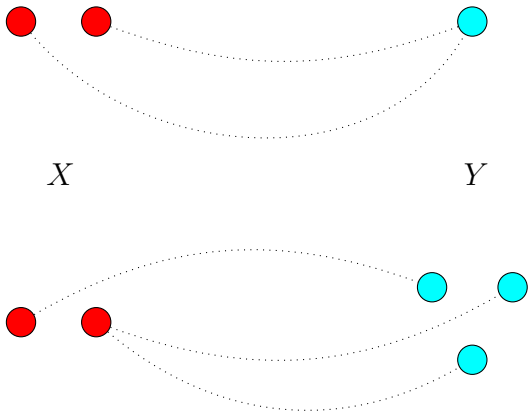
\begin{figure}[h]
    \centering
    \caption{An example with $|X| = |Y| = 4$ for which $(f, g)$-induced assignments minimizing the distortion are not bijective. Dotted lines represent one of such assignments.}
    \label{fig:uneven_sampling}
    \vspace{.5cm}
    \begin{tikzpicture}
\node[circle,draw,fill=Xcolor] (X1) at (0,0){};
\node[circle,draw,fill=Xcolor,left of=X1] (X2){};
\node[circle,draw,fill=Xcolor,below of=X1,node distance=4cm] (X3){};
\node[circle,draw,fill=Xcolor,left of=X3] (X4){};
\node[below left=1.6cm and 0cm of X1]{$X$};

\node[circle,draw,fill=Ycolor,right of=X1,node distance=5cm] (Y1){};
\node[circle,draw,fill=Ycolor,below left=3.25cm and .25cm of Y1] (Y2){};
\node[circle,draw,fill=Ycolor,below right=3.25cm and .25cm of Y1] (Y3){};
\node[circle,draw,fill=Ycolor,below of=Y1,node distance=4.5cm] (Y4){};
\node[below right=1.6cm and 4.6cm of X1]{$Y$};

\draw[dotted] (X1) to[out=-20,in=-160] (Y1);
\draw[dotted] (X2) to[out=-50,in=-120] (Y1);

\draw[dotted] (X4) to[out=30,in=160] (Y2);
\draw[dotted] (X3) to[out=-20,in=-150] (Y3);
\draw[dotted] (X3) to[out=-40,in=-150] (Y4);

\end{tikzpicture}
\end{figure}

Several bijection-based algorithms for approximating the distance have been proposed and applied for comparing point cloud \cite{bronstein2010gromov, villar2016polynomial} and graph data \cite{chung2017topological, lee2012persistent, fehri2018characterizing}. Grigor'yev et al. studied the GH distance between an arbitrary metric space and a regular simplex, and showed that various optimization problems in discrete geometry can be formulated in terms of this notion \cite{ivanov2016geometry, ivanov2019gromov}. Schmiedl proved that $\dGH(X, Y)$ cannot be approximated up to a factor of $\max\{|X|, |Y|\}^{\frac{1}{2}}$ (or up to a factor of 3 when restricted to ultrametric spaces) in polynomial time \cite{schmiedl2017computational}. Agarwal et al. proposed an algorithm for approximating the GH distance between metric trees up to a multiplicative factor, and showed that the problem is NP-hard if this factor is less than 3 \cite{agarwal2018computing}. 

Beside the exploration of the GH distance itself, several computationally motivated relaxations of it have been proposed.

\subsection{The Gromov--Wasserstein distances}
The family of Gromov--Wasserstein distances, defined between metric measure spaces, was proposed in \cite{memoli2011gromov} as a continuous relaxation of the GH distance inspired by the optimal transport problem. Here, $(f, g)$-induced assignments are replaced with soft assignments, where each pair in $X \times Y$ is associated with some positive weight and contributes to the resulting distortion proportionally to it. For every point, the weights of all the pairs which contain it must total to the measure of its singleton. This constraint guarantees that each point gets assigned its measure-worth of counterparts from the other metric measure space and contributes to distortion accordingly. However, it also implies that soft assignments generalize bijections rather than $(f, g)$-induced assignments, which reflects on the expressiveness of this relaxation. For example, under the natural choice of uniform probability measure in the example from Figure \ref{fig:uneven_sampling}, the optimal assignments are not allowed in the search space of the Gromov--Wasserstein distances as some points would get assigned twice their measure.

While computing the original Gromov--Wasserstein distances still poses an intractable problem of non-convex quadratic minimization, their regularized versions have proven to be computationally efficient for matching low-dimensional point clouds \cite{alvarez2018gromov, vayer2019sliced, bunne2019learning}, graphs \cite{xu2019gromov, xu2019scalable, vayer2018optimal, vayer2020fused}, and time series \cite{vayer2020fused}, as well as the associated tasks of vertex embedding \cite{xu2019gromov, vayer2018optimal, vayer2020fused}, graph partitioning \cite{xu2019scalable}, and computing geometric average of pairwise descriptor matrices \cite{peyre2016gromov, vayer2018optimal, vayer2020fused}. Additionally, computing a lower bound of the Gromov--Wasserstein distances was shown to be useful for discriminating graphs of up to several hundred vertices in \cite{chowdhury2019gromov}.

\subsection{The modified Gromov--Hausdorff distance}
The modified Gromov--Hausdorff (mGH) distance between compact metric spaces was proposed by M\'emoli in \cite{memoli2012some} as another relaxation and a lower bound of the standard GH distance. Instead of solving the minimization problem over the space of all possible pairs $(f, g)$, the mGH distance between metric spaces $X$ and $Y$ is computed by minimizing the distortion of assignment induced by the choice of $f:X\to Y$ and, separately --- of $g:Y \to X$. Each of the two assignments involves all points in one metric space but possibly only a subset of points in the other one.

The decoupled optimization problems are more amenable to direct computation, although remain to be of exponential size. Unlike the standard GH distance, the mGH distance allows for a rich family of lower bounds. These lower bounds were leveraged for computing the distance between metric representations of graphs in \cite{oles2019efficient}.

The mGH distance retains the expressivity of the standard GH distance --- in particular, its optimal assignments for the example from Figure \ref{fig:uneven_sampling} coincide with those for the GH distance. M\'emoli also proved that the modified and the standard Gromov--Hausdorff distances induce the same topology within $\dGH$-precompact sets of metric spaces. At the same time, Lipschitz equivalence of the Gromov--Hausdorff distances has remained unexplored, although the two are known to be not equal in general.
Interestingly enough, their natural counterparts in the class of ultrametric spaces, first proposed for the GH distance in \cite{zarichnyi2005gromov}, were recently shown to be equal \cite{memoli2019gromov}.

\subsection{Our contribution} The main contribution of this paper is closing the gap about Lipschitz equivalence of the two Gromov--Hausdorff distances. Their equivalence is important to the question of theoretical complexity of computing the mGH distance, due to the fact that the GH distance cannot be tractably approximated to any practical accuracy.

We show that the two distances are equivalent on families of metric spaces whose size is uniformly bounded. Furthermore, we offer a construction proving that the distances are not equivalent in general, even when restricted to ultrametric spaces. Lastly, we study the relation of the Gromov--Hausdorff distances when taken to a regular simplex. We provide conditions under which they are equal, show that they are within a factor of 2 from each other in general, and demonstrate that this bound is in fact tight.

\section{Background}

\subsection{Preliminaries}
The \textit{diameter} of a metric space $X$ is defined as $\displaystyle \diam(X) \defeq \sup_{x,x'\in X} d_X(x, x')$. If $X$ is compact,  $\diam(X) < \infty$.

For $\epsilon \geq 0$, $X_\epsilon \subseteq X$ is an \textit{$\epsilon$-net} of $X$ if $X$ is covered by the union of closed $\epsilon$-balls centered at the points of $X_\epsilon$:$$X \subseteq \bigcup_{x_\epsilon \in X_\epsilon} \{x \in X: d_X(x_\epsilon, x) \leq \epsilon\}.$$ If $X$ is compact, it is guaranteed to have a finite $\epsilon$-net $X_\epsilon$ (that is, $|X_\epsilon| < \infty$) for any $\epsilon>0$.

The \textit{distortion} of a mapping between two metric spaces measures how much it changes the pairwise distances. Let $f : X \to Y$, then $$\dis(f) \defeq \sup_{x, x' \in X} \big|d_X(x, x') - d_Y(f(x), f(x'))\big|.$$

If $f: X \to Y$ and $g: Y \to Z$, then $\dis(g \circ f) \leq \dis(f) + \dis(g)$. This is because, for any $x, x' \in X$, \begin{equation*}
\begin{aligned}
    \big|d_X(x, x') - d_Z((g \circ f)(x), (g \circ f)(x'))\big| &\leq \begin{aligned}[t]
        &\big|d_X(x, x') - d_Y(f(x), f(x'))\big| +\\ &\big|d_Y(f(x), f(x')) - d_Z((g \circ f)(x), (g \circ f)(x'))\big|
    \end{aligned} \\ &\leq \dis(f) + \dis(g).
\end{aligned}
\end{equation*}


For $f: X \to Y$, a mapping $g:f(X)\to X$ is called its \textit{pseudoinverse} if $(f \circ g)(y) = y$ for every $y\in f(X)$, that is, if $g$ maps every point to an element of its pre-image under $f$. It is easy to see that $\dis(g) \leq \dis(f)$ from $$\big|d_Y(y, y') - d_X(g(y), g(y'))\big| = \big|d_X(g(y), g(y')) - d_Y((f \circ g)(y), (f \circ g)(y'))\big| \quad \forall y, y' \in f(X).$$
Trivially, every mapping has a pseudoinverse. If a mapping is injective, its pseudoinverse is unique and coincides with its inverse, which implies that any invertible $f$ has $\dis(f) = \dis(f^{-1})$. Conversely, if a pair of mappings are pseudoinverses of each other, then one is the inverse of another.

The \textit{codistortion} of a pair of mappings $f:X\to Y$ and $g:Y\to X$ is defined as $$\codis(f, g) \defeq \sup_{x\in X, y \in Y} \big|d_X(x, g(y)) - d_Y(f(x), y)\big|,$$ and can be viewed as a measure of how far $f$ and $g$ are from being each other's pseudoinverses, increased by at most the distortion of one of them. Consider the case of $y = f(x)$ to see that $\codis(f, g) \geq \sup_{x\in X} d_X(x, (g \circ f)(x))$, and notice that $\sup_{x\in X} d_X(x, (g \circ f)(x)) = 0$ if and only if $f$ is a pseudoinverse of $g$. At the same time, for any $x \in X$ and $y \in Y$, \begin{align*}
    \big|d_X(x, g(y)) - d_Y(f(x), y)\big| &\leq \begin{aligned}[t]
        &\big|d_X(x, g(y)) - d_Y(f(x), (f \circ g)(y))\big| + \\&\big|d_Y(f(x), (f \circ g)(y)) - d_Y(f(x), y)\big|
    \end{aligned} \\ &\leq \dis(f) + d_Y(y, (f \circ g)(y)),
\end{align*}
so $\codis(f, g) \leq \dis(f) + \sup_{y\in Y} d_Y(y, (f \circ g)(y))$. Applying the symmetric reasoning for the case of $x = g(y)$ and combining the results yields $$\max\big\{\sup_{x\in X} d_X(x, (g \circ f)(x)), \sup_{y\in Y} d_Y(y, (f \circ g)(y))\big\} \leq \codis(f, g)$$ and $$\codis(f, g) \leq \min\big\{\dis(f) + \sup_{y\in Y} d_Y(y, (f \circ g)(y)),\dis(g) + \sup_{x\in X} d_X(x, (g \circ f)(x))\big\}.$$
It follows that $f = g^{-1}$ does not guarantee $\codis(f, g) = 0$, as is demonstrable by any bijection between a pair of non-isometric 2-point spaces. However, $\codis(f, g) = 0$ implies not only $f = g^{-1}$, but also $\dis(f) = \dis(g) = 0$, as for every pair $x, x' \in X$, $$\big|d_X(x, x') - d_Y(f(x), f(x'))\big| = \big|d_X(x, (g \circ f)(x')) - d_Y(f(x), f(x'))\big| \leq \codis(f, g) = 0.$$
\subsection{The Gromov--Hausdorff distance}
Recall that if $X, Y$ are bounded subsets of some metric space $Z$, their proximity can be measured by the Hausdorff distance, defined as $$\dH^Z(X, Y) \defeq \max \big\{\sup_{x \in X} \inf_{y \in Y} d_Z(x, y), \sup_{y \in Y} \inf_{x \in X} d_Z(y, x) \big\} \quad \text{(see Figure \ref{fig:dH})}.$$
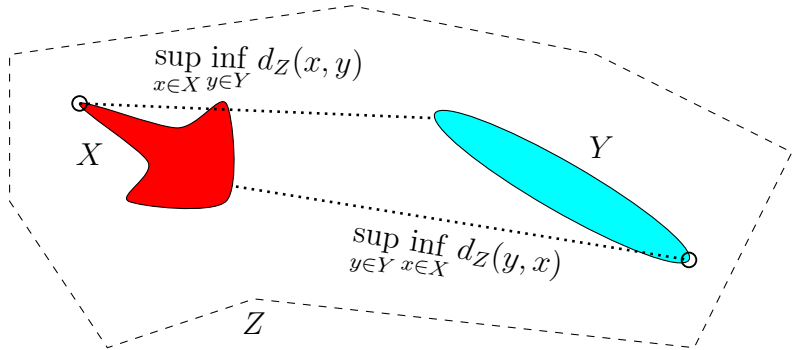
\begin{figure}[H]
    \centering
    \caption{The Hausdorff distance.}
    \label{fig:dH}
    \vspace{.5cm}
    \begin{tikzpicture}[scale=1.3]
\draw[black,dashed] plot coordinates {(-2.5, 1.5)  (-6,1) (-6, -.5) (-5,-2) (-3.5, -1.5) (1,-2) (2, 0) (0, 1) (-2.5, 1.5)} node[black,xshift=-37,yshift=-120] {$Z$};

\begin{scope}[xshift=-65]
    \draw[fill=Xcolor] plot[smooth cycle] coordinates {(-3, .5) (-2, .25) (-1.5, .5) (-1.5,-.5) (-2.5, -.5) (-2.3, -.1)} 
    node[black,xshift=-22,yshift=3] {$X$};
        \node[circle,scale=.5,line width=.25mm] (X_to) at (-1.4, -.35) {};
        \node[circle,draw,scale=.5,line width=.25mm] (X_from) at (-3, .5) {};
\end{scope}
\begin{scope}[xshift=-10,yshift=-10,rotate=-30]
    \draw[fill=Ycolor] (0,0) ellipse (1.5 and .25) node[black,xshift=15,yshift=15] {$Y$};
        \node[circle,draw,scale=.5,line width=.25mm] (Y_from) at (0:1.5 and .25) {};
        \node[circle,scale=.5,line width=.25mm] (Y_to) at (190:1.5 and .25) {};
\end{scope}

\draw[dotted,line width=.35mm] (X_to.center)--(Y_from.center) node[midway,sloped,below]{$\displaystyle \sup_{y \in Y} \inf_{x \in X} d_Z(y, x)$};
\draw[dotted,line width=.35mm] (X_from.center)--(Y_to.center) node[midway,sloped,above]{$\displaystyle \sup_{x \in X} \inf_{y \in Y} d_Z(x, y)$};

\end{tikzpicture}
    \vspace{.1cm}
\end{figure}

The GH distance builds on this notion to measure the difference between compact metric spaces $X$ and $Y$ that do not need to share some ambient metric space. It is defined as $$\dGH(X, Y) \defeq \inf_{\substack{Z\\f_\text{iso}:X\to Z\\g_\text{iso}:Y\to Z}}\dH^Z(f(X), g(Y)),$$ where $Z$ is a metric space and $f_\text{iso}, g_\text{iso}$ are isometries (see Figure \ref{fig:dGH}). Intuitively, $\dGH(X, Y)$ quantifies the difference in shape between $X$ and $Y$ by measuring how close their shape-preserving embeddings can be made to each other. The GH distance is a metric on the space of isometry classes of compact metric spaces, in particular it is 0 if and only if the two spaces are isometric.
\begin{figure}[h]
    \centering
    \caption{The concept behind the Gromov--Hausdorff distance.}
    \label{fig:dGH}
    \vspace{.5cm}
    \begin{tikzpicture}[scale=1.3]
\begin{scope}[xshift=-65]
    \draw[fill=Xcolor] plot[smooth cycle] coordinates {(-3, .5) (-2, .25) (-1.5, .5) (-1.5,-.5) (-2.5, -.5) (-2.3, -.1)} 
    node[black,xshift=-22,yshift=3] {$X$};
\end{scope}
\begin{scope}[xshift=-10,yshift=-10,rotate=-30]
    \draw[fill=Ycolor] (0,0) ellipse (1.5 and .25) node[black,xshift=15,yshift=15] {$Y$};
\end{scope}
\draw[black,dashed] plot coordinates {(-2.5,-.7)  (-6,-1.25) (-4,-4) (-1, -4) (1,-3) (.6, -2.2) (-2.5,-.7)} node[black,above] {$Z$};
\begin{scope}[xshift=-40,yshift=-125, rotate=-50]
    \draw[fill=Xcolor,opacity=.3,text opacity=1] plot[smooth cycle] coordinates {(-3, .5) (-2, .25) (-1.5, .5) (-1.5,-.5) (-2.5, -.5) (-2.3, -.1)} 
    node[black,xshift=-30,yshift=-17,text opacity=1] {$f_\text{iso}(X)$};
\end{scope}
\begin{scope}[xshift=-50,yshift=-80,rotate=20]
    \draw[fill=Ycolor,opacity=.3,text opacity=1] (0,0) ellipse (1.5 and .25) node[black,xshift=23,yshift=-12,text opacity=1] {$g_\text{iso}(Y)$};
\end{scope}
\draw[->] (-4,-.7)--(-3.3,-2.3) node[midway,left]{$f_\text{iso}$};
\draw[->] (-.55,-.7)--(-.9,-2.1) node[midway,right]{$g_\text{iso}$};

\end{tikzpicture}
\end{figure}
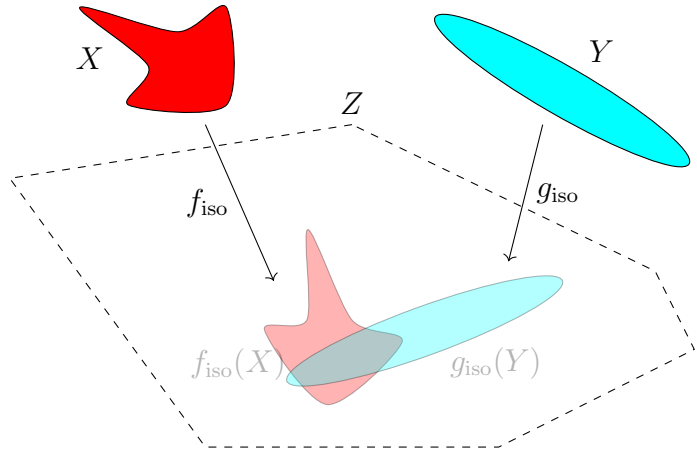

Alternatively, $\dGH(X, Y)$ can be formulated in terms of mappings between $X$ and $Y$ \cite{kalton1999distances}:
\begin{equation*}
    \dGH(X, Y) = \frac{1}{2} \inf_{\substack{f:X\to Y\\g:Y \to X}} \max \{\dis(f), \dis(g), \codis(f, g)\}.
    \label{eqn:alternative dGH}
    \tag{$*$}
\end{equation*}
The alternative formulation \eqref{eqn:alternative dGH} is more computationally friendly, as it allows computing the distance between finite $X$ and $Y$ by searching through finitely many (precisely, $|X|^{|Y|}|Y|^{|X|}$) mapping pairs, as opposed to the infinite number of ambient metric spaces and isometric embeddings into them.


\subsection{The modified Gromov--Hausdorff distance}
The mGH distance is a relaxation of the GH distance obtained by removing the codistortion term $\codis(f, g)$ from \eqref{eqn:alternative dGH},
\begin{equation*}
    \dmGH(X, Y) \defeq \frac{1}{2} \inf_{\substack{f:X\to Y\\g:Y \to X}} \max \{\dis(f), \dis(g)\} = \frac{1}{2}\max\big\{\inf_{f:X\to Y}\dis(f), \inf_{g:Y \to X}\dis(g)\big\},
    \label{eqn:dmGH}
    \tag{$**$}
\end{equation*}
which changes the problem to minimizing the distortion of mappings separately in each direction. Respectively, the size of the search space for $\dmGH(X, Y)$ is $|X|^{|Y|} + |Y|^{|X|}$, which can make a tangible difference from $\dGH(X, Y)$ when the spaces are of small size.

Similarly to the GH distance, the mGH distance is a metric on the space of compact metric spaces (up to isometry). Moreover, the two distances are topologically equivalent within $\dGH$-precompact sets of metric spaces \cite{memoli2012some}. Notice that, by definition, the relaxation is a lower bound of the original GH distance. 

\begin{remark}
Because optimization over singular mappings is studied better than over bi-directional mapping pairs, computing the mGH distance may benefit from the existing techniques. For finite $X$ and $Y$, a mapping $f:X\to Y$ can be represented as a binary row-stochastic matrix $\bm{P} \in \{0, 1\}^{|X| \times |Y|}$, so that the distances in $f(X)$ are contained in $\bm{P}\bm{D}_Y\bm{P}^T$ where $\bm{D}_Y \in \mathbb{R}^{|Y|\times |Y|}$ is the distance matrix of $Y$. This enables matrix representation of $\dis(f)$ as $\|\bm{P}\bm{D}_Y\bm{P}^T\|_\infty$, and therefore casting \eqref{eqn:dmGH} as two minimizations over the space of binary row-stochastic matrices. Subsequently, this search space can be relaxed to its convex hull, the set of all row-stochastic matrices, to enable methods from continuous optimization ---  an approach commonly used in bijection-based shape matching.

\end{remark}

\subsection{Some properties of the Gromov--Hausdorff distances}
The Gromov--Hausdorff distances satisfy
$$\frac{1}{2}|\diam(X) - \diam(Y)| \leq \dmGH(X, Y) \leq \dGH(X, Y) \leq \frac{1}{2}\max\{\diam(X), \diam(Y)\}.$$
For the mGH distance, an additional set of lower bounds can be obtained by comparing the so-called \textit{curvature sets} of $X$ and $Y$ --- the sets of all distance matrices of a given size that are possible in a metric space. This makes the mGH distance more suitable for practical problems and in particular classification tasks, where $\dmGH(X, Y)$ exceeding some threshold is sufficient to conclude that $X$ and $Y$ belong to different classes. We refer the reader to \cite{memoli2012some} and \cite{oles2019efficient} for additional theoretical and computational aspects of these lower bounds.

\begin{claim}
    Let $f:X\to Y$, then $\dGH(X, f(X)) \leq \frac{1}{2}\dis(f)$.
    \label{GH to image}
\end{claim}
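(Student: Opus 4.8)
The plan is to use the mapping-pair formulation \eqref{eqn:alternative dGH} of the Gromov--Hausdorff distance applied to the pair of spaces $X$ and $f(X)$, and to exhibit an explicit pair of mappings realizing the claimed bound. The natural candidates are $\tilde f : X \to f(X)$, the map $f$ with its codomain restricted to its image, together with $g : f(X) \to X$ any pseudoinverse of $f$ (one exists trivially). With these choices it suffices, by \eqref{eqn:alternative dGH}, to show that each of $\dis(\tilde f)$, $\dis(g)$, and $\codis(\tilde f, g)$ is at most $\dis(f)$.

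First I would note $\dis(\tilde f) = \dis(f)$, since restricting the codomain does not change any pairwise distances in the image, and that $\dis(g) \leq \dis(f)$ by the pseudoinverse distortion inequality established in the preliminaries. The remaining term is the codistortion. Here I would invoke the upper bound proved earlier, $\codis(\tilde f, g) \leq \dis(\tilde f) + \sup_{y \in f(X)} d_{f(X)}\big(y, (\tilde f \circ g)(y)\big)$; because $g$ is a pseudoinverse of $f$, we have $(\tilde f \circ g)(y) = y$ for every $y \in f(X)$, so the supremum term vanishes and $\codis(\tilde f, g) \leq \dis(f)$.

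Combining the three estimates gives $\max\{\dis(\tilde f), \dis(g), \codis(\tilde f, g)\} \leq \dis(f)$, and plugging this mapping pair into \eqref{eqn:alternative dGH} yields $\dGH(X, f(X)) \leq \tfrac{1}{2}\dis(f)$, as desired. I do not expect any serious obstacle: the argument is essentially just assembling facts already recorded in the preliminaries, and the only point requiring a moment of care is recognizing that choosing $g$ to be a pseudoinverse (rather than an arbitrary map) is exactly what kills the displacement term in the codistortion bound.
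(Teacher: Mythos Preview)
Your proposal is correct and follows essentially the same route as the paper: both choose a pseudoinverse $g$ of $f$ and verify that $\dis(f)$, $\dis(g)$, and $\codis(f,g)$ are each bounded by $\dis(f)$. The only cosmetic difference is that the paper bounds the codistortion by direct substitution of $y=(f\circ g)(y)$ into its definition, whereas you quote the general inequality $\codis(\tilde f,g)\le\dis(\tilde f)+\sup_y d\big(y,(\tilde f\circ g)(y)\big)$ from the preliminaries and observe that the supremum vanishes; these are the same computation.
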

\begin{proof}
    Let $g:f(X)\to X$ be a pseudoinverse of $f$, so in particular $\dis(g) \leq \dis(f)$. Moreover, it follows from $y = (f \circ g)(y)$ that $$\codis(f, g) = \sup_{x\in X, y\in f(X)} \big|d_X(x, g(y)) - d_Y(f(x), (f \circ g)(y))\big| \leq \dis(f),$$ and therefore \begin{align*}\dGH(X, f(X)) \leq \frac{1}{2}\max \{\dis(f), \dis(g), \codis(f, g)\} = \frac{1}{2}\dis(f). \tag*{\qedhere} \end{align*}
\end{proof}

\begin{corollary}
Let $f_1, \ldots, f_n$ be a sequence of mappings between compact metric spaces. If $(f_n \circ \ldots \circ f_1)(X)$ is well-defined, then $$\dGH(X, (f_n \circ \ldots \circ f_1)(X)) \leq \frac{1}{2}\sum_{k=1}^n \dis(f_k).$$
\end{corollary}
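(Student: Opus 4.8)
The plan is to reduce everything to a single application of Claim~\ref{GH to image} to the composite map $g \defeq f_n \circ \ldots \circ f_1 : X \to (f_n \circ \ldots \circ f_1)(X)$. The claim immediately gives $\dGH(X, g(X)) \leq \frac{1}{2}\dis(g)$, so the only thing left to do is to bound $\dis(g)$ by $\sum_{k=1}^n \dis(f_k)$.

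To get that bound I would argue by induction on $n$ that $\dis(f_n \circ \ldots \circ f_1) \leq \sum_{k=1}^n \dis(f_k)$. The case $n = 1$ is trivial. For the inductive step, write $f_n \circ \ldots \circ f_1 = f_n \circ (f_{n-1} \circ \ldots \circ f_1)$ and invoke the subadditivity of distortion under composition recorded in the Preliminaries, $\dis(\psi \circ \varphi) \leq \dis(\varphi) + \dis(\psi)$, noting that the distortion of $f_n$ on the image of $f_{n-1} \circ \ldots \circ f_1$ is at most $\dis(f_n)$; combining this with the inductive hypothesis closes the induction, and then Claim~\ref{GH to image} finishes the proof. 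The same argument unrolled gives an equivalent route: set $X_0 \defeq X$ and $X_k \defeq (f_k \circ \ldots \circ f_1)(X)$, observe $X_k = f_k(X_{k-1})$, apply Claim~\ref{GH to image} to $f_k|_{X_{k-1}}$ to get $\dGH(X_{k-1}, X_k) \leq \frac{1}{2}\dis(f_k)$, and telescope along the triangle inequality for the metric $\dGH$, obtaining $\dGH(X_0, X_n) \leq \sum_{k=1}^n \dGH(X_{k-1}, X_k) \leq \frac{1}{2}\sum_{k=1}^n \dis(f_k)$.

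There is no substantive obstacle: the statement is a routine consequence of Claim~\ref{GH to image} together with the composition inequality for distortion, both already available. The only points needing a word of care are bookkeeping ones — that restricting a map to a subset cannot increase its distortion, so replacing $\dis(f_k|_{X_{k-1}})$ by $\dis(f_k)$ is legitimate, and that each intermediate image $(f_k \circ \ldots \circ f_1)(X)$ is again a metric space to which Claim~\ref{GH to image} applies (this is implicit in the hypothesis that the composition is well-defined, and is automatic in the only case where the asserted bound is non-vacuous, namely when the $\dis(f_k)$ are finite).
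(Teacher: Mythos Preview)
Your proposal is correct; the paper states this corollary without proof, treating it as an immediate consequence of Claim~\ref{GH to image}, and both routes you outline (bounding $\dis(f_n\circ\cdots\circ f_1)$ by subadditivity and then applying Claim~\ref{GH to image} once, or applying Claim~\ref{GH to image} at each step and telescoping via the triangle inequality for $\dGH$) are exactly the intended arguments.
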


\begin{claim}
    Let $X_\epsilon$ and $Y_\epsilon$ be $\epsilon$-nets of $X$ and $Y$, respectively, for some $\epsilon \geq 0$. Then $$|\dGH(X, Y) - \dGH(X_\epsilon, Y_\epsilon)| \leq \epsilon$$ and $$|\dmGH(X, Y) - \dmGH(X_\epsilon, Y_\epsilon)| \leq \epsilon.$$
    \label{GH approximation by epsilon-nets}
\end{claim}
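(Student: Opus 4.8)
The plan is to discard the ambient-space picture and argue entirely from the mapping formulations \eqref{eqn:alternative dGH} and \eqref{eqn:dmGH}, transporting near-optimal mapping pairs between $(X, Y)$ and $(X_\epsilon, Y_\epsilon)$ through two auxiliary maps. For each space let $\phi_X : X \to X_\epsilon$ be a \emph{projection} sending every $x$ to some net point with $d_X(x, \phi_X(x)) \le \epsilon$ (this exists precisely because $X_\epsilon$ is an $\epsilon$-net), and let $\psi_X : X_\epsilon \hookrightarrow X$ be the inclusion; define $\phi_Y, \psi_Y$ the same way. The only properties needed are $\dis(\psi_X) = \dis(\psi_Y) = 0$ and $\dis(\phi_X), \dis(\phi_Y) \le 2\epsilon$, the latter being two applications of the triangle inequality starting from $d_X(x, \phi_X(x)) \le \epsilon$. (If $\epsilon = 0$ then $X_\epsilon = X$ and $Y_\epsilon = Y$, so the statement is trivial; assume $\epsilon > 0$, and take the nets finite so the relevant distances are defined.)

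For $\dGH(X, Y) \le \dGH(X_\epsilon, Y_\epsilon) + \epsilon$, fix $\delta > 0$ and pick $f' : X_\epsilon \to Y_\epsilon$, $g' : Y_\epsilon \to X_\epsilon$ with $\max\{\dis(f'), \dis(g'), \codis(f', g')\} \le 2\dGH(X_\epsilon, Y_\epsilon) + \delta$. Set $f \defeq \psi_Y \circ f' \circ \phi_X$ and $g \defeq \psi_X \circ g' \circ \phi_Y$. Subadditivity of distortion under composition gives $\dis(f) \le \dis(f') + 2\epsilon$ and $\dis(g) \le \dis(g') + 2\epsilon$. For the codistortion, estimate $\big|d_X(x, g(y)) - d_Y(f(x), y)\big|$ for $x \in X$, $y \in Y$: since $g(y) = g'(\phi_Y(y))$ and $f(x) = f'(\phi_X(x))$ as points of $X$ and $Y$, replacing $x$ by $\phi_X(x)$ in the first distance and $y$ by $\phi_Y(y)$ in the second costs two $\epsilon$-shifts in total and leaves exactly $\big|d_{X_\epsilon}(\phi_X(x), g'(\phi_Y(y))) - d_{Y_\epsilon}(f'(\phi_X(x)), \phi_Y(y))\big| \le \codis(f', g')$, whence $\codis(f, g) \le \codis(f', g') + 2\epsilon$. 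Therefore $\max\{\dis(f), \dis(g), \codis(f, g)\} \le 2\dGH(X_\epsilon, Y_\epsilon) + \delta + 2\epsilon$, and \eqref{eqn:alternative dGH} with $\delta \downarrow 0$ gives the bound. The crucial point is that the factor $\tfrac12$ in \eqref{eqn:alternative dGH} converts the additive loss of $2\epsilon$ into $\epsilon$ — this is exactly why one beats the $2\epsilon$ that $\dGH(X,Y) \le \dGH(X, X_\epsilon) + \dGH(X_\epsilon, Y_\epsilon) + \dGH(Y_\epsilon, Y)$ would give.

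For the reverse inequality $\dGH(X_\epsilon, Y_\epsilon) \le \dGH(X, Y) + \epsilon$, take near-optimal $f : X \to Y$, $g : Y \to X$ for $\dGH(X, Y)$ and push them the other way: $f' \defeq \phi_Y \circ f \circ \psi_X$ and $g' \defeq \phi_X \circ g \circ \psi_Y$, now with the inclusions inside and the projections outside. The same three estimates give $\max\{\dis(f'), \dis(g'), \codis(f', g')\} \le \max\{\dis(f), \dis(g), \codis(f, g)\} + 2\epsilon$, and again the $\tfrac12$ finishes it; combining the two directions proves $|\dGH(X, Y) - \dGH(X_\epsilon, Y_\epsilon)| \le \epsilon$. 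The modified distance is handled by the identical argument with every $\codis$ term deleted: the two compositions above give $\inf_{f : X \to Y} \dis(f) \le \inf_{f' : X_\epsilon \to Y_\epsilon} \dis(f') + 2\epsilon$ and the symmetric inequality for the $Y \to X$ direction (plus the two reverse inequalities), so by \eqref{eqn:dmGH} and halving, $|\dmGH(X, Y) - \dmGH(X_\epsilon, Y_\epsilon)| \le \epsilon$. I expect the only place needing care to be the codistortion bookkeeping — tracking which of $\phi$, $\psi$ acts where and checking that exactly two $\epsilon$-shifts (not three or four) are incurred; everything else is routine composition and triangle-inequality manipulation.
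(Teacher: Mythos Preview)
Your argument is correct and proceeds along a genuinely different route from the paper's. The paper's proof is a short triangle-inequality argument: it asserts that the nearest-point projection $f : X \to X_\epsilon$ has $\dis(f) \le \epsilon$, invokes Claim~\ref{GH to image} to get $\dGH(X, X_\epsilon) \le \tfrac{\epsilon}{2}$ and $\dmGH(X, X_\epsilon) \le \tfrac{\epsilon}{2}$ (and likewise for $Y$), and then applies the triangle inequality on $X, X_\epsilon, Y_\epsilon, Y$. You instead keep the two spaces coupled throughout: you transport a near-optimal pair between the nets and the full spaces via the compositions $\psi \circ (\cdot) \circ \phi$ and $\phi \circ (\cdot) \circ \psi$, bound the incurred loss in each of $\dis$ and $\codis$ by $2\epsilon$, and then halve. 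The paper's route is shorter when it works, but it leans on the step ``$\dis(f) \le \epsilon$'' for the nearest-point map, which in general only gives $2\epsilon$ (take $X = [0, 2\epsilon]$ with $X_\epsilon = \{0, 2\epsilon\}$: points just either side of the midpoint show $\dis(f)$ can be arbitrarily close to $2\epsilon$, and in fact $\dGH(X, X_\epsilon) = \epsilon$ here, not $\tfrac{\epsilon}{2}$). Your direct-transport argument never needs the one-space estimate to be that sharp --- only the honest $\dis(\phi_X) \le 2\epsilon$ --- and recovers the correct $\epsilon$ through the factor $\tfrac{1}{2}$ in \eqref{eqn:alternative dGH} and \eqref{eqn:dmGH}, exactly as you observe. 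So your approach is a little longer but more robust; the codistortion bookkeeping you flagged as the delicate step is carried out correctly, with exactly two $\epsilon$-shifts in each direction.
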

\begin{proof}
Consider $f: X \to X_\epsilon$ that maps every point in $X$ to the nearest point in $X_\epsilon$. Trivially, $\dis(f) \leq \epsilon$, and because $X_\epsilon \subseteq X$, $f(X) = X_\epsilon$. From Claim \ref{GH to image}, $\dmGH(X, X_\epsilon) \leq \dGH(X, X_\epsilon) \leq \frac{\epsilon}{2}$. By the analogous reasoning for $Y$, $\dmGH(Y, Y_\epsilon) \leq \dGH(Y, Y_\epsilon) \leq \frac{\epsilon}{2}$. The results follow from the triangle inequalities on $X, X_\epsilon, Y_\epsilon, Y$ for $\dGH$ and $\dmGH$, respectively.
\end{proof}

The above property allows using finite representations of $X$ and $Y$ for either of the Gromov--Hausdorff distances. This is desirable in the computational context, where $\epsilon$-nets $X_\epsilon, Y_\epsilon$ can be used in approximating the distances between potentially infinite $X, Y$ with arbitrary precision (given that the sizes of $X_\epsilon$ and $Y_\epsilon$ for the desired $\epsilon$ are amenable to computation).

\section{Equivalence on spaces of uniformly bounded size}
\label{equivalence}
Let $X_\epsilon$ and $Y_\epsilon$ be finite $\epsilon$-nets of compact metric spaces $X$ and $Y$, respectively, for some $\epsilon \geq 0$. Because the number of mappings between $X_\epsilon$ and $Y_\epsilon$ is finite, there exist $f: X_\epsilon \to Y_\epsilon$ and $g: Y_\epsilon\to X_\epsilon$ s.t. $\frac{1}{2}\max\{\dis(f), \dis(g)\} = \dmGH(X_\epsilon, Y_\epsilon)$. 

Consider a nested sequence of metric spaces $\{X_k\}_{k=0}^\infty$ inductively defined as $X_0 = X_\epsilon$, $X_1 = g(Y_\epsilon)$, and $X_k = (g \circ f)(X_{k-2})$ for $k > 1$. Analogously, define $\{Y_k\}_{k=0}^\infty$ as $Y_0 = Y_\epsilon$, $Y_1 = f(X_\epsilon)$, and $Y_k = (f \circ g)(Y_{k-2})$ for $k > 1$, and notice that $X_{k+1} = g(Y_k)$ for any $k$ (see Figure \ref{fig:nested}).

\begin{figure}[h]
    \centering
    \caption{Nested sequences $\{X_k\}_{k=0}^\infty$ and $\{Y_k\}_{k=0}^\infty$. Cyan and red lines represent acting by $f$ and $g$, respectively.}
    \label{fig:nested}
    \vspace{.5cm}
    \begin{tikzpicture}[node distance=1.75cm,auto,>=stealth']
    \node[] (Xk) {$\{X_k\}_{k=0}^\infty$:};
    \node[below of=Xk, node distance=2cm] (Yk) {$\{Y_k\}_{k=0}^\infty$:};
    \node[right of=Xk, node distance=1.75cm] (X0) {$X_\epsilon$};
    \node[right of=Yk, node distance=1.75cm] (Y0) {$Y_\epsilon$};
    \node[right of=X0] (supsetX01) {$\supseteq$};
    \node[right of=Y0] (supsetY01) {$\supseteq$};
    \node[right of=supsetX01] (X1) {$g(Y_\epsilon)$};
    \node[right of=supsetY01] (Y1) {$f(X_\epsilon)$};
    \node[right of=X1] (supsetX12) {$\supseteq$};
    \node[right of=Y1] (supsetY12) {$\supseteq$};
    \node[right of=supsetX12] (X2) {$(g\circ f)(X_\epsilon)$};
    \node[right of=supsetY12] (Y2) {$(f \circ g)(Y_\epsilon)$};
    \node[right of=X2] (supsetX23) {$\supseteq$};
    \node[right of=Y2] (supsetY23) {$\supseteq$};
    \node[right of=supsetX23] (X3) {$(g \circ f \circ g)(Y_\epsilon)$};
    \node[right of=supsetY23] (Y3) {$(f \circ g \circ f)(X_\epsilon)$};
    \node[right of=X3] (supsetX34) {$\supseteq$};
    \node[right of=Y3] (supsetY34) {$\supseteq$};
    \node[right of=supsetX34, node distance=1.25cm] (Xdots) {$\ldots$};
    \node[right of=supsetY34, node distance=1.25cm] (Ydots) {$\ldots$};

\begin{scope}[transform canvas={xshift=-.1cm}]
    
    \draw[->,color=Xcolor,transform canvas={xshift=-3.5cm}] (Y1) -- (X2) node[above,midway]{};
    \draw[->,color=Ycolor,transform canvas={xshift=-3.5cm}] (X1) -- (Y2) node[above,midway]{};
    \draw[->,color=Xcolor] (Y1) -- (X2) node[above,midway]{};
    \draw[->,color=Ycolor] (X1) -- (Y2) node[above,midway]{};
    \draw[->,color=Xcolor] (Y2) -- (X3) node[above,midway]{};
    \draw[->,color=Ycolor] (X2) -- (Y3) node[above,midway]{};
    \draw[->,color=Xcolor,transform canvas={xshift=7cm}] (Y1) -- (X2) node[above,midway]{};
    \draw[->,color=Ycolor,transform canvas={xshift=7cm}] (X1) -- (Y2) node[above,midway]{};
\end{scope}

\end{tikzpicture}
\end{figure}
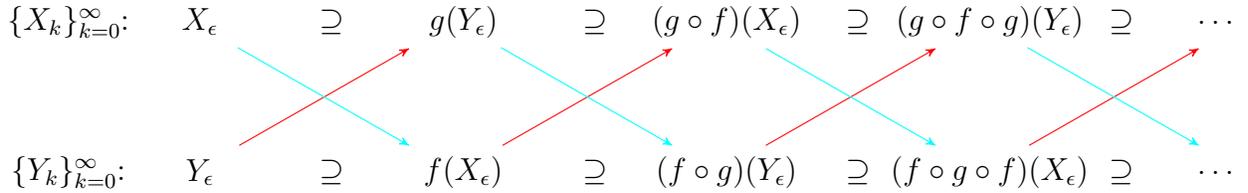

\begin{claim}
Let $\delta \geq 0$. If $\dGH(X_k, X_{k+1}) \leq \delta$ for some $k$, then \begin{align*}\dGH(X, Y) < (2k+1)\dmGH(X, Y) + (2k+2)\epsilon + \delta.\end{align*}
\label{ratio bound}
\end{claim}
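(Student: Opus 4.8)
The strategy is to first prove the corresponding estimate for the finite nets, namely $\dGH(X_\epsilon, Y_\epsilon) \le (2k+1)\,\dmGH(X_\epsilon, Y_\epsilon) + \delta$, and then pass back to $X$ and $Y$ via Claim \ref{GH approximation by epsilon-nets}, which gives $\dGH(X,Y) \le \dGH(X_\epsilon, Y_\epsilon) + \epsilon$ and $\dmGH(X_\epsilon, Y_\epsilon) \le \dmGH(X,Y) + \epsilon$; substituting these two into the first inequality will produce the asserted bound. Throughout, write $\mu \defeq \dmGH(X_\epsilon, Y_\epsilon)$, so that by the choice of $f, g$ both of them — and hence all of their restrictions to subsets of $X_\epsilon$ and $Y_\epsilon$ — have distortion at most $2\mu$.

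The key step is a map-counting estimate. From the inductive definitions, each $X_j$ is obtained by composing exactly $j$ restrictions of $f$ and $g$, starting from $X_\epsilon$ when $j$ is even (since $X_j = (g\circ f)^{j/2}(X_\epsilon)$) and starting from $Y_\epsilon$ when $j$ is odd (since $X_j = g\big((f\circ g)^{(j-1)/2}(Y_\epsilon)\big)$); symmetrically for the $Y_j$, with the two nets interchanged. Because every map in these compositions has distortion $\le 2\mu$, the corollary to Claim \ref{GH to image} gives $\dGH(X_\epsilon, X_j) \le j\mu$ for even $j$ and $\dGH(Y_\epsilon, X_j) \le j\mu$ for odd $j$. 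Note that one genuinely cannot bound $\dGH(X_\epsilon, X_j)$ this way for odd $j$, since $X_j$ is then not an image of $X_\epsilon$; it is exactly this parity restriction that dictates the routing in the next step.

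Now apply the triangle inequality for $\dGH$ along a chain that inserts the two consecutive terms $X_k$ and $X_{k+1}$, which have opposite parity, sending whichever of them is even-indexed to $X_\epsilon$ and whichever is odd-indexed to $Y_\epsilon$. If $k$ is even, $\dGH(X_\epsilon, Y_\epsilon) \le \dGH(X_\epsilon, X_k) + \dGH(X_k, X_{k+1}) + \dGH(X_{k+1}, Y_\epsilon) \le k\mu + \delta + (k+1)\mu$; if $k$ is odd, reorder the chain as $X_\epsilon,\, X_{k+1},\, X_k,\, Y_\epsilon$ to get $\dGH(X_\epsilon, Y_\epsilon) \le (k+1)\mu + \delta + k\mu$. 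In either case $\dGH(X_\epsilon, Y_\epsilon) \le (2k+1)\mu + \delta$, and feeding this into the two $\epsilon$-net inequalities from the first paragraph yields the statement. The delicate point — the step I expect to require the most care — is the parity bookkeeping: for every $k$ one of $X_k, X_{k+1}$ must be reachable from $X_\epsilon$ and the other from $Y_\epsilon$, with the two map-counts summing to exactly $2k+1$, and it is precisely the alternating construction of the two nested sequences that guarantees this and produces the factor $2k+1$.
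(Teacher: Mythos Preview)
Your argument is correct and follows essentially the same route as the paper: bound $\dGH(X_\epsilon,X_j)$ or $\dGH(Y_\epsilon,X_j)$ (depending on the parity of $j$) via the corollary to Claim~\ref{GH to image}, chain through $X_k$ and $X_{k+1}$ by the triangle inequality to get $\dGH(X_\epsilon,Y_\epsilon)\le(2k+1)\mu+\delta$, and finish with Claim~\ref{GH approximation by epsilon-nets}. The only cosmetic difference is that the paper inserts the extra waypoint $Y_k$ (using $X_{k+1}=g(Y_k)$ to pick up one factor of $\mu$, and then $k\mu$ from $Y_k$ back to the appropriate net), whereas you go directly from $X_{k+1}$ to the appropriate net in $k+1$ steps; the arithmetic is identical.
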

\vspace{-1.3cm}
\begin{proof}If $k$ is even then $\dGH(X_\epsilon, X_k), \dGH(Y_\epsilon, Y_k) \leq \frac{k}{2}\max\{\dis(f), \dis(g)\}$ from the corollary to Claim \ref{GH to image}. Analogously, if $k$ is odd, $\dGH(X_\epsilon, Y_k), \dGH(Y_\epsilon, X_k) \leq \frac{k}{2}\max\{\dis(f), \dis(g)\}$. It follows that regardless of the parity of $k$, the triangle inequality on $X_\epsilon, X_k, X_{k+1}, Y_k, Y_\epsilon$ entails
\begin{align*}
    \dGH(X_\epsilon, Y_\epsilon) &\leq \dGH(X_k, X_{k+1}) + \dGH(X_{k+1}, Y_k) + \min\{\begin{aligned}[t]&\dGH(X_\epsilon, X_k) + \dGH(Y_\epsilon, Y_k), \\&\dGH(X_\epsilon, Y_k) + \dGH(Y_\epsilon, X_k)\}\end{aligned} \\
    &\leq \delta + \frac{2k+1}{2}\max\{\dis(f), \dis(g)\} \\
    &= (2k+1)\dmGH(X_\epsilon, Y_\epsilon) + \delta.
\end{align*}
From Claim \ref{GH approximation by epsilon-nets}, $\dGH(X, Y) - \epsilon \leq (2k+1)(\dmGH(X, Y) + \epsilon) + \delta$.
\end{proof}

\begin{corollary}
$\dGH(X, Y) \leq (2n-1)\dmGH(X, Y) + 2n\epsilon$, where $n \defeq \min\{|X_\epsilon|, |Y_\epsilon|\}$.
\end{corollary}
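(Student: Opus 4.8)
The plan is to invoke Claim~\ref{ratio bound} with $\delta = 0$, which requires exhibiting an index $k \le n-1$ for which two consecutive terms of the nested sequence coincide. First I would make the monotonicity of the construction explicit: since $X_{k+1} = g(Y_k)$ and $Y_{k+1} = f(X_k)$, a simultaneous induction on $k$ gives
\[
X_0 \supseteq X_1 \supseteq X_2 \supseteq \cdots \quad\text{and}\quad Y_0 \supseteq Y_1 \supseteq Y_2 \supseteq \cdots,
\]
the base cases $X_1 = g(Y_\epsilon) \subseteq X_\epsilon$ and $Y_1 = f(X_\epsilon) \subseteq Y_\epsilon$ holding because $g$ and $f$ have the stated codomains, and the step following from $Y_{k+1} = f(X_k) \subseteq f(X_{k-1}) = Y_k$ and $X_{k+1} = g(Y_k) \subseteq g(Y_{k-1}) = X_k$. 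Consequently $|X_0| \ge |X_1| \ge |X_2| \ge \cdots$ is a non-increasing sequence of positive integers, each $X_k$ being nonempty as an iterated image of the nonempty $\epsilon$-nets $X_\epsilon$, $Y_\epsilon$.

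By symmetry of $\dGH$, $\dmGH$, and $n$ under interchanging the roles of $X$ and $Y$ (which also interchanges $f$ with $g$, hence the two nested sequences), I may assume without loss of generality that $n = |X_\epsilon| = |X_0|$. If $|X_k| > |X_{k+1}|$ held for every $k \in \{0, 1, \ldots, n-1\}$, then $|X_n| \le |X_0| - n = 0$, contradicting $|X_n| \ge 1$. Hence $|X_k| = |X_{k+1}|$ for some $k \le n-1$, and since $X_{k+1} \subseteq X_k$ are finite sets of equal cardinality, $X_{k+1} = X_k$; in particular $\dGH(X_k, X_{k+1}) = 0$.

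Applying Claim~\ref{ratio bound} with this $k$ and $\delta = 0$ gives $\dGH(X, Y) < (2k+1)\dmGH(X, Y) + (2k+2)\epsilon$, and since $k \le n-1$ while $\dmGH(X, Y) \ge 0$ and $\epsilon \ge 0$, the right-hand side is at most $(2n-1)\dmGH(X, Y) + 2n\epsilon$, yielding the claim (in fact with strict inequality). The only delicate point is the reduction to $n = |X_0|$: Claim~\ref{ratio bound} is phrased for the sequence $\{X_k\}$, so without passing to the symmetric statement (the same argument run on $\{Y_k\}$ via $Y_{k+1} = f(X_k)$) the pigeonhole count on $\{X_k\}$ could only start from $X_1$, where $|X_1| = |g(Y_\epsilon)| \le n$, and would give $k \le n$ — a bound weaker by one step. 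I therefore expect the symmetry reduction, rather than any computation, to be the only thing that needs attention.
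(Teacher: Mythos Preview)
Your proposal is correct and follows the same approach as the paper's proof: assume without loss of generality that $n = |X_\epsilon|$, use pigeonhole on the nested finite nonempty sequence $\{X_k\}$ to find $k \le n-1$ with $X_k = X_{k+1}$, and apply Claim~\ref{ratio bound} with $\delta = 0$. You simply spell out in full the monotonicity induction and the symmetry reduction that the paper leaves implicit.
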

\begin{proof}
Without loss of generality, assume $n = |X_\epsilon|$. Recall that the sequence $\{X_k\}_{k=0}^{\infty}$ is nested and its members are non-empty and finite. Then $\exists k \leq n - 1$ s.t. $X_k = X_{k+1}$ (or otherwise $n = |X_0| > |X_1| > \ldots > |X_n| > 0$ yields a contradiction), and Claim \ref{ratio bound} applies.
\end{proof}

\begin{theorem}
Let $\mathcal{F}$ be a family of metric spaces whose sizes are at most $n < \infty$. Then the Gromov--Hausdorff distances are equivalent on $\mathcal{F}$ with the Lipschitz constant at most $2n-1$.
\end{theorem}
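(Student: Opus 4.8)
The plan is to apply the corollary to Claim~\ref{ratio bound} with $\epsilon = 0$ and then bound the resulting space-dependent constant by the uniform quantity $2n-1$.

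First I would fix arbitrary $X, Y \in \mathcal{F}$. Since $|X|, |Y| \le n < \infty$, both spaces are finite; in particular each is a (finite) $0$-net of itself, so the optimal mappings $f, g$ and the nested sequences $\{X_k\}$, $\{Y_k\}$ from Section~\ref{equivalence} are available and consist of non-empty finite sets. Thus the corollary to Claim~\ref{ratio bound} applies with $\epsilon = 0$ and gives
$$\dGH(X, Y) \le (2m - 1)\,\dmGH(X, Y), \qquad m \defeq \min\{|X|, |Y|\}.$$
Because $\dmGH(X, Y) \ge 0$ and $m \le n$, the right-hand side is at most $(2n-1)\,\dmGH(X, Y)$. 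Combining this with the universal lower bound $\dmGH(X, Y) \le \dGH(X, Y)$ recorded in the properties section yields
$$\dmGH(X, Y) \le \dGH(X, Y) \le (2n-1)\,\dmGH(X, Y)$$
for every pair $X, Y \in \mathcal{F}$, which is exactly Lipschitz equivalence of the two Gromov--Hausdorff distances on $\mathcal{F}$ with constant $2n - 1$.

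I do not expect a genuine obstacle: the statement is essentially the corollary to Claim~\ref{ratio bound} with the $\epsilon$-slack set to zero (legitimate since all members of $\mathcal{F}$ are already finite) and with the space-dependent $m$ replaced by the uniform bound $n$. The only point worth spelling out is that Claim~\ref{GH approximation by epsilon-nets} and its consequences are stated for all $\epsilon \ge 0$, so the degenerate case $\epsilon = 0$ is covered and no compactness or net-refinement subtlety arises. If a fully self-contained argument were preferred, one could instead re-run the proof of Claim~\ref{ratio bound} verbatim with $X_\epsilon = X$, $Y_\epsilon = Y$, $\delta = 0$, choosing $k \le n - 1$ with $X_k = X_{k+1}$ (which exists by the pigeonhole argument on the strictly decreasing cardinalities in the nested sequence); but invoking the already-proved corollary is cleaner.
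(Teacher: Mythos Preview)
Your proposal is correct and follows exactly the paper's approach: apply the corollary to Claim~\ref{ratio bound} with $\epsilon = 0$ to get $\dGH(X,Y) \le (2m-1)\dmGH(X,Y)$ for $m = \min\{|X|,|Y|\}$, then replace $m$ by the uniform bound $n$. The paper's own proof is in fact the one-line version of what you wrote, so the additional justification you supply (finite spaces are their own $0$-nets, $\epsilon \ge 0$ is permitted in Claim~\ref{GH approximation by epsilon-nets}) is a harmless elaboration rather than a departure.
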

\begin{proof}
For any $X, Y \in \mathcal{F}$, $\dGH(X, Y) \leq (2n-1)\dmGH(X, Y)$ follows from the corollary to Claim \ref{ratio bound} for $\epsilon = 0$.
\end{proof}

To see that the constant $2n-1$ is not tight, it suffices to consider metric spaces of size at most 2.
\begin{claim}
If $|X|, |Y| \leq 2$, then $\dGH(X, Y) = \dmGH(X, Y)$.
\end{claim}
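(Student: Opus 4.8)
The plan is to leverage the chain of inequalities already recorded,
$$\tfrac{1}{2}|\diam(X) - \diam(Y)| \leq \dmGH(X, Y) \leq \dGH(X, Y),$$
and to establish the matching upper bound $\dGH(X, Y) \leq \tfrac{1}{2}|\diam(X) - \diam(Y)|$ under the hypothesis $|X|, |Y| \leq 2$. Any such bound forces all three quantities to collapse to $\tfrac{1}{2}|\diam(X) - \diam(Y)|$, and in particular yields $\dGH(X, Y) = \dmGH(X, Y)$.

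First I would dispose of the cases in which one of the spaces is a single point (this also covers both being singletons, where everything is $0$). If, say, $|X| = 1$, then $\diam(X) = 0$, and the upper bound $\dGH(X, Y) \leq \tfrac{1}{2}\max\{\diam(X), \diam(Y)\} = \tfrac{1}{2}\diam(Y) = \tfrac{1}{2}|\diam(X) - \diam(Y)|$ is exactly what is needed. It then remains to treat $|X| = |Y| = 2$; write $X = \{x_1, x_2\}$ and $Y = \{y_1, y_2\}$ with $a = d_X(x_1, x_2)$ and $b = d_Y(y_1, y_2)$.

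For this case I would invoke the mapping formulation \eqref{eqn:alternative dGH} with the mutually inverse bijections $f : X \to Y$ and $g : Y \to X$ given by $f(x_i) = y_i$, $g(y_i) = x_i$. Then $\dis(f) = \dis(g) = |a - b|$, and evaluating $\codis(f, g) = \sup_{x \in X,\, y \in Y}|d_X(x, g(y)) - d_Y(f(x), y)|$ over the four pairs $(x_i, y_j)$ shows that the two ``diagonal'' pairs contribute $0$ and the two ``off-diagonal'' pairs contribute $|a - b|$, so $\codis(f, g) = |a - b|$ as well. Hence \eqref{eqn:alternative dGH} gives $\dGH(X, Y) \leq \tfrac{1}{2}|a - b| = \tfrac{1}{2}|\diam(X) - \diam(Y)|$, which closes the argument. (One could instead realize this bound concretely by isometrically embedding $X$ and $Y$ as $\{-a/2, a/2\}$ and $\{-b/2, b/2\}$ in $\R$ and computing the Hausdorff distance there.)

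I do not anticipate a genuine obstacle — the whole statement is a short case check. The only points that need a moment's attention are remembering to dispatch the one-point cases separately (no bijection is available there) and, implicitly, noting that for two-point spaces the codistortion of the optimal bijection does not exceed its distortion, so the term dropped in passing from \eqref{eqn:alternative dGH} to \eqref{eqn:dmGH} is simply not binding here.
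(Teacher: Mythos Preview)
Your proposal is correct and follows essentially the same approach as the paper: a singleton case handled via the diameter bounds, and the two-point case handled by exhibiting a bijection $f$ with $g=f^{-1}$ for which $\dis(f)=\dis(g)=\codis(f,g)=|\diam(X)-\diam(Y)|$, then sandwiching with the general lower bound $\tfrac12|\diam(X)-\diam(Y)|\le\dmGH(X,Y)$. The only cosmetic difference is that the paper phrases the singleton case via the observation that the unique $f:X\to Y$ has $\dis(f)=\diam(X)$, rather than via the diameter lower bound you quote, but the content is the same.
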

\begin{proof}
Case 1: $\min\{|X|, |Y|\} = 1$. Without loss of generality, let $|Y| = 1$. Then the only possible $f:X \to Y$ has $\dis(f) = \diam(X)$, and therefore $$\frac{1}{2}\diam(X) \leq \dmGH(X, Y) \leq \dGH(X, Y) \leq \frac{1}{2}\max\{\diam(X), \diam(Y)\} = \frac{1}{2}\diam(X).$$

Case 2: $|X| = |Y| = 2$. Then any pair of bijective $f:X \to Y$ and $g = f^{-1}$ satisfies $\dis(f) = \dis(g) = \codis(f, g) = |\diam(X) - \diam(Y)|$, and therefore
\begin{align*}
\dGH(X, Y) \leq \frac{1}{2}|\diam(X) - \diam(Y)| \leq \dmGH(X, Y) \leq \dGH(X, Y). \tag*{\qedhere}
\end{align*}
\end{proof}
\section{Non-equivalence in the general case}
Recall that a metric space $X$ is ultrametric if its triangle inequality is strengthened to $d_X(x, x'') \leq \max\{d_X(x, x'), d_X(x', x'')\} \quad \forall x,x',x'' \in X$, which entails that all triangles in $X$ are isosceles. This section shows that the two Gromov--Hausdorff distances are not equivalent in general, even when restricted to ultrametric spaces. This is demonstrated by constructing pairs of ultrametric spaces that can be mapped into each other with minimal distortion but do not allow for the mappings with small distortion and codistortion at the same time.

Given some $a > 0$, define an operation $\overset{a}{\sqcup}$ between metric spaces so that $X \overset{a}{\sqcup} Y$ produces a pair of set $Z = X \sqcup Y$ (the disjoint union of $X$ and $Y$) and function $d_Z:Z \times Z \to \mathbb{R}$, defined as $d_Z(z, z')=\begin{cases}d_X(z,z') & z,z'\in X\\d_Y(z,z') & z,z'\in Y\\a & \text{otherwise}\end{cases}$.
\begin{claim}
    If $X$ and $Y$ are ultrametric and $a \geq \max\{\diam(X), \diam(Y)\}$, then $X \overset{a}{\sqcup} Y$ is an ultrametric space.
    \label{makes_ultrametric}
\end{claim}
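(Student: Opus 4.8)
The plan is to check directly that the pair $(Z, d_Z)$ with $Z = X \sqcup Y$ satisfies the axioms of an ultrametric space. Nonnegativity, symmetry, and the property $d_Z(z,z') = 0 \iff z = z'$ are inherited immediately: if $z, z'$ lie in the same part, these follow from the corresponding properties of $d_X$ or $d_Y$; if they lie in different parts, then $z \neq z'$ and $d_Z(z,z') = a > 0$. Since the strong (ultrametric) triangle inequality implies the ordinary one, everything reduces to proving
\[
 d_Z(z, z'') \le \max\{d_Z(z, z'), d_Z(z', z'')\} \qquad \text{for all } z, z', z'' \in Z.
\]

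Before the case analysis I would record one observation: every value taken by $d_Z$ is at most $a$. Indeed, distances inside $X$ are at most $\diam(X) \le a$, distances inside $Y$ are at most $\diam(Y) \le a$, and cross distances equal $a$; hence $\diam(Z) = a$ (assuming $X, Y$ nonempty). Call a pair $\{z, z'\}$ a \emph{cross pair} if one of its points lies in $X$ and the other in $Y$, so that $d_Z(z,z') = a$ for every such pair. Now fix $z, z', z''$ and split into two cases. If $z, z', z''$ all lie in $X$, then $d_Z$ agrees with $d_X$ on these three points and the desired inequality is exactly the ultrametric inequality in $X$; the case of all three in $Y$ is identical with $Y$ in place of $X$. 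Otherwise the three points do not all lie in the same part, and then at least one of the consecutive pairs $\{z, z'\}$, $\{z', z''\}$ must be a cross pair — for if neither were, $z, z'$ would lie in the same part and $z', z''$ would lie in the same part, forcing all three into the part containing $z'$. Hence $\max\{d_Z(z,z'), d_Z(z',z'')\} = a \ge d_Z(z, z'')$ by the diameter bound, and the inequality holds. This exhausts all configurations.

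There is no genuine obstacle here; the only thing to get right is the bookkeeping of the case split, and the observation that makes it painless is that $a = \diam(Z)$, so the right-hand maximum is automatically $a$ — and therefore dominates $d_Z(z,z'')$ — as soon as a cross pair appears among the two consecutive pairs. Note that the hypothesis $a \ge \max\{\diam(X), \diam(Y)\}$ enters only through the diameter bound, while the hypothesis that $X$ and $Y$ are individually ultrametric is used only in the all-in-one-part case.
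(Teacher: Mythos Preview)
Your proof is correct and follows essentially the same approach as the paper's: both split into the case where all three points lie in one part (inheriting the ultrametric inequality from $X$ or $Y$) and the mixed case, where the presence of at least one cross pair forces the right-hand maximum to equal $a \geq d_Z(z,z'')$. The paper phrases the mixed case as ``two sides of length $a$ and the third of length at most $a$,'' while you phrase it via the consecutive pairs $\{z,z'\}$, $\{z',z''\}$; these are equivalent observations.
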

\begin{proof}
Trivially, the strong triangle inequality in $X \overset{a}{\sqcup} Y$ holds for any $x, x', x'' \in X$ or $y, y', y'' \in Y$. Any triangle formed by two points from one metric space and a point from the other one must have two sides of length $a$ and the third side of length at most $a$, thus also satisfying the strong triangle inequality.
\end{proof}
Let $\{U_k\}_{k=0}^\infty$ denote a sequence of metric spaces s.t. $U_0$ is a singleton, $U_1$ is a pair of points at distance 1, and $U_k \defeq U_{k-2} \overset{k}{\sqcup} U_{k-2}$ for any $k > 1$ (see Figure \ref{fig:Uk}). Trivially, $U_0$ and $U_1$ are ultrametric spaces, and by induction using Claim \ref{makes_ultrametric}, so is $U_k$ for every $k > 1$. Notice that $\diam(U_k) = k$ for every $k$ and, moreover, all the points in $U_k$ have identical distance distributions, comprised of positive integers $k, k-2, \ldots$, and 0.

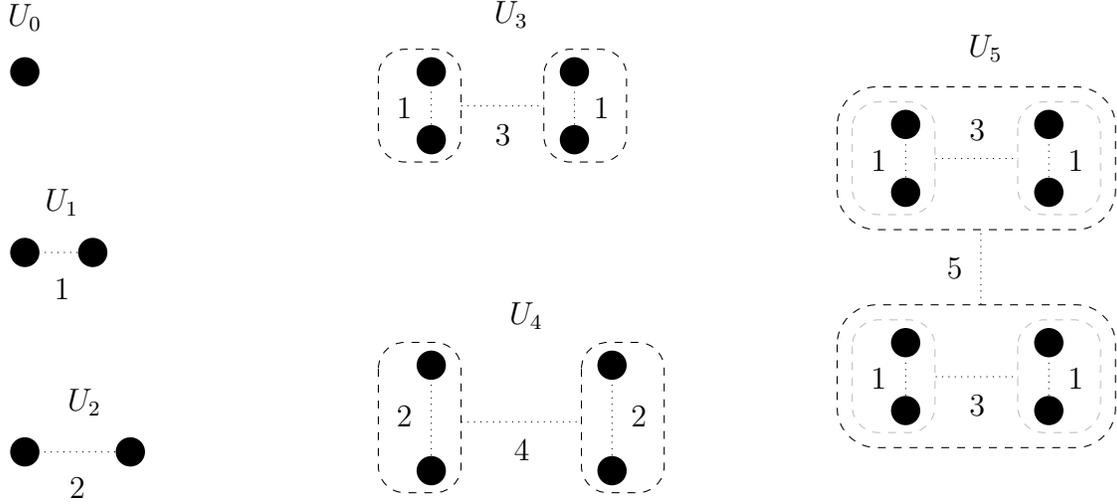
\begin{figure}[h]
    \centering
    \caption{First six members of the ultrametric sequence $\{U_k\}_{k=0}^\infty$. Dashed lines encircle groups of points with identical distances to outside of the group. Numbers by dotted lines indicate distances between points or their groups.}
    \label{fig:Uk}
    \vspace{.5cm}
    \begin{tikzpicture}
\node[circle,fill] (U0){};
\node[above=.2cm of U0]{$U_0$};

\node[circle,fill,below=2cm of U0] (U1a){};
\node[circle,fill,right=.5cm of U1a] (U1b){};
\node[above left=.2cm and -.1cm of U1b]{$U_1$};
\draw[dotted] (U1b) to (U1a) node[below right=.2cm and .25cm]{1};

\node[circle,fill,below=2.25cm of U1a] (U2a){};
\node[circle,fill,right=1cm of U2a] (U2b){};
\node[above left=.2cm and .1cm of U2b]{$U_2$};
\draw[dotted] (U2b) to (U2a) node[below right=.2cm and .45cm]{2};

\node[circle,fill,right=5cm of U0] (U3a){};
\node[circle,fill,below=.5cm of U3a] (U3b){};
\node[circle,fill,right=1.5cm of U3a] (U3c){};
\node[circle,fill,right=1.5cm of U3b] (U3d){};
\node[above right=1.2cm and .55cm of U3b]{$U_3$};
\draw[dotted] (U3b)+(0.4,.45) to ++(1.5,.45) node[below left=.1cm and .3cm]{3};
\draw[dotted] (U3b) to (U3a) node[below left=.2cm and .25cm,xshift=4]{1};
\draw[dashed,rounded corners=10] (4.7,.3) rectangle (5.8,-1.2);
\draw[dotted] (U3d) to (U3c) node[below right=.2cm and .25cm,xshift=-4]{1};
\draw[dashed,rounded corners=10] (6.9,.3) rectangle (8,-1.2);

\node[circle,fill,below=3.5cm of U3a] (U4a){};
\node[circle,fill,below=1cm of U4a] (U4b){};
\node[circle,fill,right=2cm of U4a] (U4c){};
\node[circle,fill,right=2cm of U4b] (U4d){};
\node[above right=1.6cm and .75cm of U4b]{$U_4$};
\draw[dotted] (U4b)+(0.4,.65) -- ++(2,.65) node[below left=.1cm and .55cm]{4};
\draw[dashed,rounded corners=10] (4.7,-3.6) rectangle (5.8,-5.6);
\draw[dotted] (U4b) to (U4a) node[below left=.4cm and .25cm,xshift=4]{2};
\draw[dashed,rounded corners=10] (7.4,-3.6) rectangle (8.5,-5.6);
\draw[dotted] (U4d) to (U4c) node[below right=.4cm and .25cm,xshift=-4]{2};

\node[circle,fill,right=4cm of U3c,yshift=-.7cm] (U5a){};
\node[circle,fill,below=.5cm of U5a] (U5b){};
\node[circle,fill,right=1.5cm of U5a] (U5c){};
\node[circle,fill,right=1.5cm of U5b] (U5d){};
\node[circle,fill,below=2.5cm of U5a] (U5e){};
\node[circle,fill,below=.5cm of U5e] (U5f){};
\node[circle,fill,right=1.5cm of U5e] (U5g){};
\node[circle,fill,right=1.5cm of U5f] (U5h){};
\node[above right=1.45cm and .55cm of U5b]{$U_5$};
\draw[dotted] (U5a)+(1,-1.45) -- ++(1,-2.4) node[below left=-.77cm and .1cm]{5};
\draw[dotted] (U5b)+(0.4,.45) to ++(1.5,.45) node[above left=.1cm and .3cm]{3};
\draw[dotted] (U5b) to (U5a) node[below left=.2cm and .25cm,xshift=4]{1};
\draw[dashed,lightgray,rounded corners=10] (11,-.4) rectangle (12.1,-1.9);
\draw[dotted] (U5d) to (U5c) node[below right=.2cm and .25cm,xshift=-4]{1};
\draw[dashed,lightgray,rounded corners=10] (13.2,-.4) rectangle (14.3,-1.9);
\draw[dashed,rounded corners=15] (10.8,-.2) rectangle (14.5,-2.1);
\draw[dotted] (U5f)+(0.4,.45) to ++(1.5,.45) node[below left=.1cm and .3cm]{3};
\draw[dotted] (U5f) to (U5e) node[below left=.2cm and .25cm,xshift=4]{1};
\draw[dashed,lightgray,rounded corners=10] (11,-3.3) rectangle (12.1,-4.8);
\draw[dotted] (U5h) to (U5g) node[below right=.2cm and .25cm,xshift=-4]{1};
\draw[dashed,lightgray,rounded corners=10] (13.2,-3.3) rectangle (14.3,-4.8);
\draw[dashed,rounded corners=15] (10.8,-3.1) rectangle (14.5,-5);

\end{tikzpicture}
\end{figure}

\begin{claim}
For every $k$, the non-trivial distances from any $x \in U_k$ are all same-parity positive integers up to and including $k$: $\{d_{U_k}(x, x'): x' \neq x\} = \{k - 2j: j = 0, 1, \ldots, \lfloor\frac{k-1}{2}\rfloor\}$.
\label{Uk distances}
\end{claim}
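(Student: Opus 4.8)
The plan is to induct on $k$, exploiting the two-step recursion $U_k = U_{k-2} \overset{k}{\sqcup} U_{k-2}$. The base cases are $k = 0$, where $U_0$ is a singleton so the set of non-trivial distances is empty — matching the right-hand side, whose index range $j = 0, \ldots, \lfloor -\tfrac12 \rfloor = -1$ is empty — and $k = 1$, where $U_1$ consists of two points at distance $1$, matching $\{1\} = \{1 - 2j : j = 0, \ldots, 0\}$.

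For the inductive step, fix $k \geq 2$ and assume the claim for $U_{k-2}$. Since $k \geq \diam(U_{k-2}) = k - 2$, Claim \ref{makes_ultrametric} applies (as already used to see that $U_k$ is ultrametric), and $U_k$ is the disjoint union of two isometric copies of $U_{k-2}$ with every cross-copy distance equal to $k$. Any $x \in U_k$ lies in exactly one of these copies, call it $C$; then for $x' \in C \setminus \{x\}$ we have $d_{U_k}(x, x') = d_{U_{k-2}}(x, x')$, while for $x'$ in the other copy $d_{U_k}(x, x') = k$. Hence
$$\{d_{U_k}(x, x') : x' \neq x\} = \{k\} \cup \{d_{U_{k-2}}(x, x'') : x'' \in C,\ x'' \neq x\},$$
and by the inductive hypothesis the second set equals $\{(k-2) - 2j : j = 0, \ldots, \lfloor \tfrac{k-3}{2}\rfloor\}$. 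In particular this already shows the set does not depend on the choice of $x$, so all points of $U_k$ share the same distance set, as the statement implicitly asserts.

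It then remains to reconcile the two descriptions. Setting $j' = j + 1$ gives $\{(k-2) - 2j : j = 0, \ldots, m\} = \{k - 2j' : j' = 1, \ldots, m+1\}$ with $m = \lfloor \tfrac{k-3}{2} \rfloor$, and the elementary identity $\lfloor \tfrac{k-3}{2}\rfloor + 1 = \lfloor \tfrac{k-1}{2} \rfloor$ turns the upper limit into $\lfloor \tfrac{k-1}{2}\rfloor$. Adjoining $\{k\}$ (the $j' = 0$ term) yields exactly $\{k - 2j : j = 0, \ldots, \lfloor \tfrac{k-1}{2}\rfloor\}$, completing the induction. There is no serious obstacle here: the argument is a routine induction on the recursive construction, and the only point requiring a moment's care is the index bookkeeping with the floor functions — in particular checking that the degenerate instances $k = 0$ and $k = 2$ (where $U_{k-2}$ is a singleton and the "inner" distance set is empty) are handled consistently by the same formula.
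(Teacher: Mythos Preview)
Your proof is correct and follows exactly the same route as the paper's: induction on $k$ via the two-step recursion $U_k = U_{k-2}\overset{k}{\sqcup}U_{k-2}$, with the base cases $U_0,U_1$ handled directly and the inductive step obtained by adjoining the new cross-copy distance $k$ to the distance set of $U_{k-2}$. The paper's version is simply a one-line sketch of this same argument, whereas you have spelled out the index bookkeeping with the floor functions and the degenerate cases $k=0,2$.
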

\begin{proof}
The statement is trivial for $U_0$ and $U_1$, and follows for any $U_k$, $k > 1$, from noticing that $U_k$ has distance $k$ in addition to all the distances in $U_{k-2}$ and applying induction.
\end{proof}
An important property of this sequence is that each subsequent member can be mapped to its predecessor without distorting the distances by more than $1$.
\begin{claim}
\label{sequence_map}
For every $k$, there exists $f: U_{k+1} \to U_k$ s.t. $\dis(f) = 1$.
\end{claim}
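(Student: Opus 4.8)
The plan is to argue by strong induction on $k$ with step $2$, so that the map $f:U_{k+1}\to U_k$ is assembled from a map $U_{k-1}\to U_{k-2}$ furnished by the inductive hypothesis (the claim at index $k-2$). Since the recursion $j\mapsto j-2$ terminates at the two smallest indices, I would take $k=0$ and $k=1$ as base cases and handle all $k\ge 2$ by the inductive step.

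For the inductive step, fix $k\ge 2$. By the definition of the sequence, $U_{k+1}=U_{k-1}\overset{k+1}{\sqcup}U_{k-1}$ and $U_k=U_{k-2}\overset{k}{\sqcup}U_{k-2}$; write $U_{k+1}=A\sqcup B$ and $U_k=C\sqcup D$ for the corresponding isometric copies of $U_{k-1}$ and $U_{k-2}$. Apply the claim at index $k-2$ to get $g:U_{k-1}\to U_{k-2}$ with $\dis(g)=1$, and define $f$ to act as $g$ from $A$ onto $C$ and as (a copy of) $g$ from $B$ onto $D$. For two points in the same block, say $a,a'\in A$, the distance $d_{U_{k+1}}(a,a')$ equals the corresponding distance in $U_{k-1}$ and $d_{U_k}(f(a),f(a'))$ the corresponding distance in $U_{k-2}$, so the two differ by at most $\dis(g)=1$; the same holds inside $B$. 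For $a\in A$ and $b\in B$ one has $d_{U_{k+1}}(a,b)=k+1$ while $d_{U_k}(f(a),f(b))=k$, a difference of exactly $1$. Taking the supremum over all pairs yields $\dis(f)\le 1$. For the reverse inequality, recall $\diam(U_{k+1})=k+1$ while every distance in $U_k$ is at most $k$ (previous claim), so any $x,x'\in U_{k+1}$ realizing the diameter force $\bigl|d_{U_{k+1}}(x,x')-d_{U_k}(f(x),f(x'))\bigr|\ge 1$; hence $\dis(f)=1$. The base cases are immediate: for $k=0$ the only map $U_1\to U_0$ sends both points to the single point of $U_0$ and has distortion $|1-0|=1$; for $k=1$ any bijection $U_2\to U_1$ has distortion $|2-1|=1$.

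The argument is essentially routine once the construction is in place; the only points requiring care are the index bookkeeping — that $U_{k+1}$ decomposes into copies of $U_{k-1}$ whereas $U_k$ decomposes into copies of $U_{k-2}$, which is exactly why the inductive hypothesis is invoked at index $k-2$ rather than $k-1$ — and the observation that the single ``new'' long distance $k+1$ in $U_{k+1}$ is carried to the long distance $k$ in $U_k$ by sending block to block. I do not anticipate any genuine obstacle beyond getting these indices aligned.
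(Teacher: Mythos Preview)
Your proof is correct and follows essentially the same approach as the paper: handle the base cases $k=0,1$ directly, and for $k\ge 2$ assemble the map $U_{k+1}\to U_k$ blockwise from the map $U_{k-1}\to U_{k-2}$ provided by the inductive hypothesis at index $k-2$. Your write-up is more explicit than the paper's (which simply asserts that the blockwise map has distortion $1$), but the construction and the induction scheme are identical.
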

\begin{proof}
The only possible mapping of $U_1$ into $U_0$ and any bijection between $U_2$ and $U_1$ have the distortion of 1. If for some $k > 1$ there exists $f:U_{k-1}\to U_{k-2}$ s.t. $\dis(f) = 1$, then using $f$ to map both parts of $U_{k+1} = U_{k-1} \overset{k+1}{\sqcup} U_{k-1}$ to the corresponding parts of $U_k = U_{k-2} \overset{k}{\sqcup} U_{k-2}$ also results in the distortion of 1, which concludes the proof by induction.
\end{proof}

Consider two sequences of finite ultrametric spaces $X_k \defeq U_{k+1}$ and $Y_k \defeq U_k \overset{k+2}{\sqcup} U_0$, for $k = 1, 2, \ldots$. The fact that every $Y_k$ is ultrametric follows from Claim \ref{makes_ultrametric}. 
\begin{lemma}
$\dmGH(X_k, Y_k) \leq \frac{1}{2}$ for every $k$.
\label{dmGH sequence}
\end{lemma}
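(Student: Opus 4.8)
The plan is to use the decoupled formula $\dmGH(X_k,Y_k)=\frac12\max\{\inf_{f:X_k\to Y_k}\dis(f),\ \inf_{g:Y_k\to X_k}\dis(g)\}$ and to exhibit, in each direction, a single mapping of distortion at most $1$. Recall $X_k=U_{k+1}$ and $Y_k=U_k\overset{k+2}{\sqcup}U_0$, and that $U_k$ sits isometrically inside $Y_k$ since $\overset{a}{\sqcup}$ leaves distances within $U_k$ unchanged. The forward direction is then immediate: Claim \ref{sequence_map} supplies $h:U_{k+1}\to U_k$ with $\dis(h)=1$, and composing $h$ with the inclusion $U_k\hookrightarrow Y_k$ yields $f:X_k\to Y_k$ with $\dis(f)=1$, so $\inf_{f:X_k\to Y_k}\dis(f)\le 1$.

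For the reverse direction I would write $X_k=U_{k+1}=U_{k-1}\overset{k+1}{\sqcup}U_{k-1}$ and define $g:Y_k\to X_k$ piecewise. On the copy of $U_k$ inside $Y_k$, use the distortion-$1$ map $U_k\to U_{k-1}$ furnished by Claim \ref{sequence_map} (the claim applied with index $k-1$), landing in the first summand $U_{k-1}$; on the singleton $U_0\subseteq Y_k$, send its point to an arbitrary point of the second summand $U_{k-1}$. To bound $\dis(g)$ one checks the possible pairs of source points: two points of $U_k$ are handled by the claim, contributing at most $1$; the trivial pair from $U_0$ contributes $0$; and for a point of $U_k$ together with the point of $U_0$, the distance $k+2$ in $Y_k$ is sent to the inter-summand distance $k+1$ of $U_{k+1}$, an error of exactly $1$. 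Hence $\dis(g)\le 1$, so $\inf_{g:Y_k\to X_k}\dis(g)\le 1$, and combining the two directions gives $\dmGH(X_k,Y_k)\le\frac12\max\{1,1\}=\frac12$.

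The one genuinely delicate point is the reverse map, and in particular the observation that makes it work: the "large" distance $k+2$ separating the two components of $Y_k$ exceeds by only $1$ the largest distance $k+1$ occurring in $U_{k+1}$, while $U_k$ still compresses into the half-sized space $U_{k-1}$ with unit distortion. It is also worth noting the edge case $k=1$: there $U_{k-1}=U_0$ is a single point, so the map $U_k\to U_{k-1}$ collapses $U_1$ to a point with distortion $1$, but the case analysis above is unaffected and the bound still holds.
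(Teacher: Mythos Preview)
Your proof is correct and follows essentially the same approach as the paper: build the forward map $X_k\to Y_k$ by composing the distortion-$1$ map $U_{k+1}\to U_k$ from Claim~\ref{sequence_map} with the inclusion, and build the reverse map $Y_k\to X_k$ by sending $U_k$ into one copy of $U_{k-1}$ via Claim~\ref{sequence_map} and the singleton $U_0$ into the other copy, then check the three types of point pairs. Your explicit treatment of the edge case $k=1$ is a small addition not spelled out in the paper, but otherwise the arguments coincide.
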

\begin{proof}
From Claim \ref{sequence_map}, there exists $f: X_k \to U_k \subset Y_k$ s.t. $\dis(f) = 1$.

To distinguish the two identical parts of $X_k = U_{k-1} \overset{k+1}{\sqcup} U_{k-1}$, denote them as $U_{k-1}$ and $U'_{k-1}$ respectively. From Claim \ref{sequence_map}, there exists  $g: U_k \to U_{k-1} \subset X_k$ s.t. $\dis(g) = 1$. Let $x' \in U'_{k-1}$, and consider mapping $\widetilde{g}: Y_k \to X_k$, defined as $\widetilde{g}(y)=\begin{cases}g(y) & y \in U_k\\x' & y \in U_0\end{cases}$. Its distortion is \begin{align*}
    \dis(\widetilde{g}) &= \max_{y,y' \in Y_k}\big|d_{Y_k}(y, y') - d_{X_k}(\widetilde{g}(y), \widetilde{g}(y'))\big| \\ &= \max\Big\{\max_{y,y' \in U_k}\big|d_{Y_k}(y, y') - d_{X_k}(g(y), g(y'))\big|, \max_{y \in U_k, y' \in U_0}\big|d_{Y_k}(y, y') - d_{X_k}(g(y), x')\big| \Big\} \\ &= \max \{\dis(g), k+2 - (k+1)\} \\ &= 1,
\end{align*} and therefore $\dmGH(X_k, Y_k) \leq \frac{1}{2}\max\{\dis(f), \dis(\widetilde{g})\} = \frac{1}{2}$.
\end{proof}

\begin{lemma}
$\dGH(X_k, Y_k) \geq \frac{k}{4}$ for every $k$.
\label{dGH sequence}
\end{lemma}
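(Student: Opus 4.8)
\section*{Proof proposal for Lemma \ref{dGH sequence}}

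The plan is to work with the mapping formulation \eqref{eqn:alternative dGH} and argue by contradiction. Suppose $\dGH(X_k,Y_k)<\frac{k}{4}$; then there are $f\colon X_k\to Y_k$ and $g\colon Y_k\to X_k$ with $\dis(f),\dis(g),\codis(f,g)$ all strictly below $\frac{k}{2}$, and I will reach a contradiction using only $\dis(f)<\frac k2$ and $\codis(f,g)<\frac k2$. The first step exploits the ``lonely point'' of $Y_k$: let $p$ be the unique point coming from the $U_0$ summand of $Y_k=U_k\overset{k+2}{\sqcup}U_0$, so that $d_{Y_k}(p,y)=k+2$ for every $y\neq p$. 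From the preliminaries, $\codis(f,g)\geq\sup_{y\in Y_k}d_{Y_k}(y,(f\circ g)(y))\geq d_{Y_k}(p,(f\circ g)(p))$, and since $\codis(f,g)<\frac k2<k+2$ this forces $(f\circ g)(p)=p$. Writing $q\defeq g(p)\in X_k=U_{k+1}$, this says $f(q)=p$.

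Next I would extract a dichotomy on the distances from $q$ in $X_k$. Applying the definition of distortion to the pair $(x,q)$ gives, for every $x\in X_k$,
$\dis(f)\geq\big|d_{X_k}(x,q)-d_{Y_k}(f(x),f(q))\big|=\big|d_{X_k}(x,q)-d_{Y_k}(f(x),p)\big|$,
and $d_{Y_k}(f(x),p)$ equals $0$ when $f(x)=p$ and equals $k+2$ otherwise. Hence every distance $d_{X_k}(x,q)$ realized in $X_k$ must lie in $[0,\dis(f)]\cup[k+2-\dis(f),\infty)$; equivalently, no point of $X_k$ sits at distance from $q$ strictly inside the open window $\big(\dis(f),\,k+2-\dis(f)\big)$.

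Finally I would obtain the contradiction from the known distance spectrum of $U_{k+1}$. By Claim \ref{Uk distances}, the distances from $q$ to the other points of $X_k=U_{k+1}$ are exactly the values $(k+1)-2j$ for $j=0,1,\ldots,\lfloor k/2\rfloor$, i.e.\ the same-parity integers spaced $2$ apart running from $k+1$ down to $1$ or $2$. The forbidden window $\big(\dis(f),\,k+2-\dis(f)\big)$ has length $k+2-2\dis(f)>2$ precisely because $\dis(f)<\frac k2$, and an open interval of length exceeding $2$ must contain one of these equally spaced values; moreover the window is contained in $(0,k+2)$, and every value of the form $(k+1)-2j$ lying in $(0,k+2)$ is in fact realized from $q$, so the offending point genuinely exists in $X_k$ --- contradicting the previous paragraph. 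Therefore $\max\{\dis(f),\dis(g),\codis(f,g)\}\geq\frac k2$ for all $f,g$, whence $\dGH(X_k,Y_k)\geq\frac k4$. I expect the main subtlety to be the elementary bookkeeping that converts ``$\dis(f)<k/2$'' into ``the window is wide enough to swallow a realizable distance,'' together with checking the offending distance stays in the valid range $[1,k+1]$; the conceptual crux, by contrast, is simply the observation that $\codis(f,g)$ by itself pins $f\circ g$ down on the lonely point $p$, after which the distortion of $f$ alone does all the work and neither $g$ nor $\dis(g)$ needs to be analyzed (a cruder parity-only argument, by comparison, would yield only the useless bound $\dGH(X_k,Y_k)\ge\frac12$).
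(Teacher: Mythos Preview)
Your proof is correct and rests on the same underlying idea as the paper's --- the isolated point $p\in Y_k$ together with the arithmetic distance spectrum of $U_{k+1}$ --- but the packaging differs. The paper argues directly, splitting into four cases according to $|f^{-1}(p)|$; in the main case it bounds $\codis(f,g)$ from below using the farthest point of $f^{-1}(p)$ and the nearest point outside it (both measured from $g(p)$), and then takes a min--max over the possible distance values $k+1-2j$. You instead argue by contradiction and replace the case split by a single ``forbidden window'': after invoking $\codis(f,g)<\frac{k}{2}$ once to force $f(g(p))=p$, you work exclusively with $\dis(f)$, obtaining an open gap $(\dis(f),\,k+2-\dis(f))$ of length exceeding $2$ in which no distance from $q=g(p)$ may fall, and then observe that the step-$2$ spectrum of $U_{k+1}$ must meet any such gap. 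Your route is somewhat more streamlined --- the paper's Cases~1--3 are absorbed into the window argument, and you never need the farthest/nearest bookkeeping --- while the paper's version keeps $\codis$ visible throughout rather than trading it for $\dis(f)$ after one step; both ultimately establish $\max\{\dis(f),\codis(f,g)\}\geq\frac{k}{2}$.
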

\begin{proof}
Denote the only point in $U_0 \subset Y_k$ as $y_0$, and notice that $d_{Y_k}(y_0, y) = k+2$ for every $y \in Y_k\setminus U_0$. For some $f:X_k \to Y_k$ and $g:Y_k \to X_k$, consider the following possibilities for the pre-image of $y_0$ under $f$.

Case 1: $f^{-1}(y_0) = X_k$. Then $\dis(f) = k + 1 \geq \frac{k}{2}$.

Case 2: $f^{-1}(y_0) = \{x_0\}$. Consider $x \in X_k$ s.t. $d_{X_k}(x, x_0) \in \{1, 2\}$, whose existence is guaranteed by Claim \ref{Uk distances}. From $f(x) \neq y_0$ it follows that $\dis(f) \geq k \geq \frac{k}{2}$.

Case 3: $f^{-1}(y_0) = \emptyset$. Let $x = g(y_0)$, then $\codis(f, g) \geq \big|d_{X_k}(x, g(y_0)) - d_{Y_k}(f(x), y_0)\big| = k+2 \geq \frac{k}{2}$.

Case 4: $1 < |f^{-1}(y_0)| < |X_k|$. Let $x_0 \in X_k$ be the farthest point from $g(y_0)$ s.t. $f(x_0) = y_0$. Claim \ref{Uk distances} entails that $$\exists j \in \{0, 1, \ldots, \lfloor\frac{k}{2}\rfloor\} \quad d_{X_k}(x_0, g(y_0)) = k+1-2j,$$ so in particular $$\codis(f, g) \geq d_{X_k}(x_0, g(y_0)) - d_{Y_k}(f(x_0), y_0) = k-(2j-1).$$
Now, let $x_* \in X_k$ be the nearest point to $g(y_0)$ s.t. $f(x_*) \neq y_0$. Because every $x \in X_k$ s.t. $d_{X_k}(x, g(y_0)) > k+1-2j$ must satisfy $f(x) \neq y_0$, Claim \ref{Uk distances} guarantees that $d_{X_k}(x_*, g(y_0)) \leq k+3-2j$. Then \begin{align*}\codis(f, g) &\geq \big|d_{X_k}(x_*, g(y_0)) - d_{Y_k}(f(x_*), y_0)\big| \\ &= k+2 - d_{X_k}(x_*, g(y_0)) \\ &\geq 2j-1,\end{align*}
and therefore $\codis(f, g) \geq \max\{k-(2j-1), 2j-1\}$. Because $j$ can take a range of values,
\begin{align*}\codis(f, g) &\geq \min_{j = 0, 1, \ldots, \lfloor\frac{k}{2}\rfloor} \max\{k-(2j-1), 2j-1\} \\ &\geq \min_{\lambda \in \mathbb{R}} \max\{k-\lambda, \lambda\} \\ &= \frac{k}{2}.\end{align*}

It follows that $\max\{\dis(f), \codis(f, g)\} \geq \frac{k}{2}$ holds irrespectively of the choice of $f$ and $g$, and therefore \begin{align*}\dGH(X_k, Y_k) \geq \frac{1}{2}\inf_{\substack{f:X_k\to Y_k,\\g:Y_k\to X_k}}\max\{\dis(f), \codis(f,g)\} \geq \frac{k}{4}. \tag*{\qedhere}\end{align*}
\end{proof}

\begin{theorem}
$\dGH$ and $\dmGH$ are not Lipschitz-equivalent even when restricted to ultrametric spaces.
\end{theorem}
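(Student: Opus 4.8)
The plan is to extract a contradiction directly from the two lemmas just proved. Unwinding the definition, Lipschitz equivalence of $\dGH$ and $\dmGH$ on a family $\mathcal{F}$ of (ultra)metric spaces means there is a finite constant $C\geq 1$ with $\tfrac{1}{C}\,\dGH(A,B)\leq \dmGH(A,B)\leq C\,\dGH(A,B)$ for all $A,B\in\mathcal{F}$. Since $\dmGH\leq\dGH$ always holds (the relaxation is a lower bound of the standard distance), the right-hand inequality is automatic, and the whole content of the claim reduces to the existence of a finite $C$ with $\dGH(A,B)\leq C\,\dmGH(A,B)$ uniformly over $\mathcal{F}$. So the first step is to make this reduction explicit.

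Next I would feed in the explicit family built above: for $k=1,2,\ldots$ take $X_k\defeq U_{k+1}$ and $Y_k\defeq U_k\overset{k+2}{\sqcup}U_0$, each of which is a finite ultrametric space by Claim \ref{makes_ultrametric}. By Lemma \ref{dmGH sequence} we have $\dmGH(X_k,Y_k)\leq\tfrac12$ for every $k$, while by Lemma \ref{dGH sequence} we have $\dGH(X_k,Y_k)\geq\tfrac{k}{4}$. Hence $\dGH(X_k,Y_k)\big/\dmGH(X_k,Y_k)\geq \tfrac{k}{2}\to\infty$. Consequently, for any candidate constant $C$ one may pick $k>2C$ and obtain $\dGH(X_k,Y_k)\geq\tfrac{k}{4}>\tfrac{C}{2}\geq C\,\dmGH(X_k,Y_k)$, contradicting the supposed uniform bound. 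Therefore no such $C$ exists and $\dGH$, $\dmGH$ fail to be Lipschitz-equivalent on the family of ultrametric spaces; a fortiori they are not Lipschitz-equivalent on all compact metric spaces.

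All the real work is in Lemmas \ref{dmGH sequence} and \ref{dGH sequence}, so this final step is just the one-line quotient estimate above, and I do not expect any genuine obstacle here beyond pinning down the meaning of "Lipschitz-equivalent". It is worth remarking, though, that because $\dmGH(X_k,Y_k)$ stays bounded away from $0$ while $\dGH(X_k,Y_k)$ grows linearly in $k$, the failure is authentically of Lipschitz type and not an artifact of both distances degenerating; and that the sizes $|X_k|,|Y_k|$ necessarily grow with $k$, which is exactly what the positive result of Section \ref{equivalence} (equivalence with constant $2n-1$ on spaces of size at most $n$) forces, and which contrasts with the equality of the ultrametric Gromov--Hausdorff distances of \cite{memoli2019gromov}.
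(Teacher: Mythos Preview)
Your proposal is correct and follows exactly the paper's approach: combine Lemma~\ref{dmGH sequence} and Lemma~\ref{dGH sequence} to obtain $\dGH(X_k,Y_k)/\dmGH(X_k,Y_k)\geq k/2$, which rules out any uniform Lipschitz constant. The paper's proof is the same one-line quotient estimate; your additional unpacking of the definition of Lipschitz equivalence and the contextual remarks are sound elaboration (though note that the parenthetical claim that $\dmGH(X_k,Y_k)$ is ``bounded away from $0$'' is not actually established---only $\dmGH\leq\tfrac12$ is---but your main inequality $\dGH(X_k,Y_k)>C\cdot\dmGH(X_k,Y_k)$ does not rely on it).
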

\begin{proof}
It follows from Lemmas \ref{dmGH sequence} and \ref{dGH sequence} that $\frac{\dGH(X_k, Y_k)}{\dmGH(X_k, Y_k)} \geq \frac{k}{2}$ for any $k = 1,\ldots,\infty$.
\end{proof}

\section{The Gromov--Hausdorff distances to a regular simplex}
A \textit{regular simplex} is a metric space whose non-trivial distances are all equal. It has been recently shown that certain problems in graph theory and metric geometry can be formulated in terms of the GH distance to a regular simplex. In particular, this notion can be used to calculate the weights of minimum spanning tree edges on a complete graph \cite{ivanov2016geometry}, solve the generalized Borsuk problem, or find clique cover and chromatic numbers of a graph \cite{ivanov2019gromov}.

This section considers the Gromov--Hausdorff distances between a compact metric space $X$ and a regular simplex $\Delta$. It states that $\dmGH(X, \Delta)$ is either equal to $\dGH(X, \Delta)$ or approximates it within the factor of 2, which suggests that the mGH distance can be used for solving or approximating the above problems on par with the GH distance.

\begin{remark}
Notice that the cardinality of $\Delta$ must be finite as long as the Gromov--Hausdorff distances are defined between the compact metric spaces only. However, allowing $\dGH$ and $\dmGH$ to take the value $+\infty$ immediately extends their definitions to also include non-compact spaces, which was the approach taken by Ivanov and Tuzhilin in \cite{ivanov2016geometry} and \cite{ivanov2019gromov}. We note that the results in this section are compatible with the extended definition, and in particular they remain valid if the cardinality of $\Delta$ is allowed to be infinite.
\end{remark}

\begin{theorem}
Let $\Delta$ be a regular simplex, $X$ --- a compact metric space. Then
\begin{enumerate}
    \item $\dGH(X, \Delta) \leq 2\dmGH(X, \Delta)$ is a tight bound;
    \item $\dGH(X, \Delta) = \dmGH(X, \Delta)$ when $|X| < |\Delta|$ or $|X| = |\Delta| < \infty$ or $\diam(\Delta) \leq \frac{1}{2}\diam(X)$.
\end{enumerate}
\end{theorem}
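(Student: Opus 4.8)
The plan is to reduce everything to the combinatorics of maps to and from a regular simplex. Write $s=\diam(\Delta)$ and $n=|\Delta|$. A map $g:\Delta\to X$ is just a choice of $n$ points of $X$ (with repetition), and $\dis(g)=\sup_{p\neq p'}\lvert s-d_X(g(p),g(p'))\rvert$. A map $f:X\to\Delta$ is an ordered, possibly partly empty partition of $X$ into blocks $f^{-1}(p_1),\dots,f^{-1}(p_n)$, with $\dis(f)$ equal to the larger of the supremum of the block diameters and $\sup\lvert d_X(x,x')-s\rvert$ over pairs $x,x'$ in different blocks. The key structural point is an \emph{asymmetry}: if $f$ is not onto $\Delta$, say $p_0\notin f(X)$, then for any $g$ the pair $(g(p_0),p_0)$ forces $\codis(f,g)\ge\lvert 0-s\rvert=s$, so by \eqref{eqn:alternative dGH} any mapping pair with small codistortion must use a surjective $f$; conversely, for a surjective $f$ one can take $g$ to be a section of $f$ (i.e.\ $f\circ g=\mathrm{id}_\Delta$), which has $\dis(g)\le\dis(f)$ and $\codis(f,g)\le\dis(f)$. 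Splitting the infimum in \eqref{eqn:alternative dGH} on whether $f$ is onto then yields
\[
2\,\dGH(X,\Delta)=\min\Big\{\inf_{f:X\twoheadrightarrow\Delta}\dis(f),\ Q\Big\},\qquad Q\ge s,
\]
where $Q$ is the non-surjective contribution; meanwhile $2\,\dmGH(X,\Delta)=\max\{A,B\}$ with $A=\inf_{f:X\to\Delta}\dis(f)$ and $B=\inf_{g:\Delta\to X}\dis(g)$.

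For the upper bound in item 1, fix a near-optimal partition $f$ for $A$, with $k$ nonempty blocks and $\dis(f)\le A+\eta$. If $k=n$ this $f$ is surjective, so $2\,\dGH\le\dis(f)\le A+\eta\le 2\,\dmGH+\eta$, which after $\eta\to0$ gives equality. If $k<n$ (so $|X|\ge n$) one refines to $n$ nonempty blocks by repeatedly splitting blocks; each split only creates new different-block pairs at distance at most the diameter of the block being split, hence at most $A+\eta$, so the new distortion contributions are at most $s$ once $A+\eta\le s$ — thus the refined surjective $f'$ has $\dis(f')\le\max\{A+\eta,s\}$. Simultaneously, pigeonholing the $n$ points of any $g$ into the $k<n$ blocks puts two of them in one block at distance $\le A$, so $\dis(g)\ge s-A$, hence $B\ge s-A$ and $2\,\dmGH=\max\{A,B\}\ge s/2$. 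Combining, $2\,\dGH\le\dis(f')\le s\le 2\cdot 2\,\dmGH$. The leftover regimes — $A\ge s$ (pigeonhole vacuous), $\diam(X)\ge 2s$, and $|X|<n$ — are handled with $\dGH(X,\Delta)\le\tfrac12\max\{\diam X,s\}$, the lower bound $\dmGH(X,\Delta)\ge\tfrac12\lvert\diam X-s\rvert$, and the same refinement/section idea; these are precisely the three cases of item 2, where the arguments in fact give equality. Concretely: if $|X|<n$ then no $f$ is onto, $B=s$ (pigeonhole plus the constant map), and extending a section of an $A$-optimal $f$ by sending the unused vertices of $\Delta$ to a point of minimal eccentricity gives $Q=\max\{A,s\}=2\,\dmGH$; if $|X|=|\Delta|=n<\infty$ then every bijection $X\to\Delta$ has the common distortion $\beta=\sup_{x\neq x'}\lvert d_X(x,x')-s\rvert$, so $\inf_{f\twoheadrightarrow\Delta}\dis f=\beta$, while unless $\diam X>2s$ (which reduces to the next case) $\beta\le s$ forces $B=\beta$, so $2\,\dGH\le\beta\le 2\,\dmGH$; and if $\diam X\ge 2s$ then $A\ge\max\{s,\diam X-s\}=\diam X-s$, so $2\,\dmGH=A$, while a section of an $A$-optimal $f$ extended arbitrarily on unused vertices has $\codis,\dis\le A$ (all eccentricities are $\le\diam X\le s+A$), so $2\,\dGH\le A$.

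For tightness of the constant $2$: the analysis shows it can only be approached when the optimal mGH partition truly needs fewer than $n$ blocks and the block a surjective $f$ is forced to create can only be split across pairs of vanishingly small distance. This is realized by a compact "interval plus far points" family: for $m\ge 2$ let $X_m$ consist of the $m+1$ equally spaced points of the segment $[0,\tfrac{s}{2}]$ (induced metric) together with $n-2$ further points mutually at distance $s$ and at distance $s$ from every segment point. One checks $A=s/2$ (partition "segment $\cup$ singletons"), $B=s/2$ (attained by the two endpoints plus the $n-2$ far points; any admissible selection must take at least two segment points, forcing $\dis(g)\ge s/2$), and that every surjective $f$ must isolate each far point and split the $m+1$ collinear points into two blocks, which always separates a consecutive pair at distance $\tfrac{s}{2m}$, so $\inf_{f\twoheadrightarrow\Delta}\dis f=s-\tfrac{s}{2m}<s\le Q$; hence $2\,\dGH(X_m,\Delta)=s-\tfrac{s}{2m}$ and $\dGH(X_m,\Delta)/\dmGH(X_m,\Delta)=2-\tfrac1m\to 2$.

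The step I expect to be the main obstacle is bookkeeping the boundary regimes cleanly — stating a single version of the $\dGH$ formula, and the relevant properties of $Q$, that simultaneously powers the factor-$2$ bound and all three equality cases — together with the rigorous verification of the three extremal quantities ($A$, $B$, and the surjective-distortion infimum) for the tightness family, where the lower bounds require ruling out cleverer partitions and point selections.
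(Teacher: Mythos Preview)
Your plan is correct and closely parallels the paper's proof, with one organizational difference worth noting. The paper imports two results of Ivanov--Tuzhilin as black boxes: the identity $2\,\dGH(X,\Delta)=\inf_{f(X)=\Delta}\dis(f)$ when $|\Delta|\le |X|$, and the distortion formula (their Claim~5). You instead derive the needed structure from scratch via the observation that a non-surjective $f$ forces $\codis(f,g)\ge s$ while a surjective $f$ admits a section $g$ with $\dis(g),\codis(f,g)\le\dis(f)$; this yields your $2\,\dGH=\min\{\inf_{f\twoheadrightarrow\Delta}\dis f,\;Q\}$ with $Q\ge s$, which is slightly weaker than the cited formula but suffices everywhere you use it. Your pigeonhole step ($B\ge s-A$ when the near-optimal $f$ has $k<n$ blocks) is the same idea as the paper's composition trick $s\le\dis(f\circ g)\le\dis f+\dis g$: both amount to noting that $f\circ g:\Delta\to\Delta$ is not injective. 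Your refinement of $f$ to a surjection is exactly the paper's construction of $f_\Delta$; the paper then invokes Claim~5 to get $\dis f_\Delta=s-\delta(f_\Delta)\le s$, while you bound the new cross-block contributions directly. The tightness families coincide when $|\Delta|=2$ (your $m$ is the paper's $n$, with $s=2n$); your version with $n-2$ far points generalizes this to arbitrary $|\Delta|$. One small rough edge: your ``leftover regime $A\ge s$'' is not actually handled by the diameter inequalities you cite, but your own refinement argument already covers it, since then $\dis(f')\le\max\{A+\eta,s\}=A+\eta$ and $2\,\dmGH\ge A$, giving equality directly. The ``minimal eccentricity'' choice in the $|X|<n$ case is harmless but unnecessary; any extension works once you use $A\ge\diam X-s$.
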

\begin{proof}
See Subsections \ref{simplex of greater cardinality} and \ref{simplex of at most same cardinality}.
\end{proof}

\subsection{Regular simplex of greater cardinality}
\label{simplex of greater cardinality}
Ivanov and Tuzhilin showed in \cite{ivanov2019ultra} that, when $|\Delta| > |X|$, the GH distance between the two spaces is simply $\dGH(X, \Delta) = \frac{1}{2}\max\{\diam(\Delta), \diam(X) - \diam(\Delta)\}.$ Let $f:X\to\Delta$, $g:\Delta\to X$, and consider a sequence $\{(x_k, x'_k)\}_{k=1}^\infty$ in $X \times X$ s.t. $d_X(x_k, x'_k) \to \diam(X)$ as $k \to \infty$.

Case 1: there exists a subsequence $\{(x_{k_j}, x'_{k_j})\}_{j=1}^\infty$ s.t. $f(x_{k_j}) \neq f(x'_{k_j}) \quad \forall j$. Then $\dis(f) \geq |\diam(X) - \diam(\Delta)|$.

Case 2: $f(x_k) = f(x'_k)$ starting from some value of $k$. Then $\dis(f) \geq \diam(X)$.

In any case, $$\dis(f) \geq \min\{|\diam(X) - \diam(\Delta)|, \diam X\} \geq \diam(X) - \diam(\Delta),$$ and, because the choice of $f$ was arbitrary, $\inf_{f:X\to\Delta} \dis(f) \geq \diam(X) - \diam(\Delta).$ At the same time, $|\Delta| > |X|$ means that $g$ cannot be injective and thus $\dis(g) \geq \diam(\Delta)$, which implies $\inf_{g:\Delta\to X} \dis(g) \geq \diam(\Delta).$ It follows that \begin{align*}
    \dmGH(X, \Delta) &= \frac{1}{2}\max\{\inf_{f:X\to\Delta} \dis(f), \inf_{g:\Delta\to X} \dis(g)\}\\ &\geq \frac{1}{2}\max\{\diam(\Delta), \diam(X) - \diam(\Delta)\}\\ &= \dGH(X, \Delta),
\end{align*} and therefore $\dGH(X, \Delta) = \dmGH(X, \Delta)$.

\subsection{Regular simplex of at most the same cardinality}
\label{simplex of at most same cardinality}
Ivanov and Tuzhilin also showed that, when $|\Delta| \leq |X|$, the GH distance is obtained by minimizing the distortion of a surjective $f:X\to \Delta$, $$\dGH(X, \Delta) = \frac{1}{2}\inf_{f(X)=\Delta}\dis(f),$$ and characterized the distortion of mappings from $X$ to $\Delta$ as follows.
\begin{claim}
\label{dis simplex}
For any $f:X\to \Delta$,
\begin{align*}
\dis(f) = \sup\Big(\big\{\diam(f^{-1}(y)): y \in f(X)\big\} \cup \big\{\diam(X) - \diam(\Delta), \diam(\Delta) - \delta(f)\big\}\Big),\end{align*} where $\delta(f) \defeq \inf\{d_X(x, x'): f(x) \neq f(x')\}$.
\end{claim}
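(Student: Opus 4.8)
The plan is to read off everything from the single structural fact that makes $\Delta$ a regular simplex: for any $x,x'\in X$ one has $d_\Delta(f(x),f(x'))=0$ when $f(x)=f(x')$ and $d_\Delta(f(x),f(x'))=\diam(\Delta)$ otherwise. Splitting the pair set $X\times X$ into the pairs collapsed by $f$ and the pairs separated by $f$, and using that the supremum over a disjoint union is the maximum of the suprema, I would write
\begin{align*}
\dis(f)=\max\Big\{\ \sup_{f(x)=f(x')} d_X(x,x'),\quad \sup_{f(x)\neq f(x')}\big|d_X(x,x')-\diam(\Delta)\big|\ \Big\}.
\end{align*}
The first supremum is exactly $\sup\{\diam(f^{-1}(y)):y\in f(X)\}$, since a pair is collapsed precisely when both points lie in a common fibre, and conversely every fibre diameter is approached by such pairs.

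For the second supremum I would split the absolute value according to the sign of $d_X(x,x')-\diam(\Delta)$. The separated pairs with $d_X(x,x')\ge\diam(\Delta)$ contribute $\sup\{d_X(x,x')-\diam(\Delta):f(x)\neq f(x')\}$, which is bounded above by $\diam(X)-\diam(\Delta)$; the separated pairs with $d_X(x,x')\le\diam(\Delta)$ contribute $\diam(\Delta)-\inf\{d_X(x,x'):f(x)\neq f(x')\}=\diam(\Delta)-\delta(f)$, directly by the definition of $\delta(f)$. Hence $\dis(f)$ is the maximum of the fibre-diameter term, a quantity that is at most $\diam(X)-\diam(\Delta)$, and $\diam(\Delta)-\delta(f)$, which already gives the inequality ``$\le$'' in the claimed identity.

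It remains to prove ``$\ge$'', i.e. that each of the three quantities $\diam(f^{-1}(y))$, $\diam(X)-\diam(\Delta)$, $\diam(\Delta)-\delta(f)$ is at most $\dis(f)$; the first and the third are immediate from the displayed formula, so the one genuinely non-routine point is forcing $\diam(X)-\diam(\Delta)$ into the supremum even though no separated pair need realise it. For this I would take a sequence $(x_n,x_n')$ in $X$ with $d_X(x_n,x_n')\to\diam(X)$ and argue termwise: if $f(x_n)=f(x_n')$ then $d_X(x_n,x_n')\le\diam(f^{-1}(f(x_n)))\le\dis(f)$, and if $f(x_n)\neq f(x_n')$ then $d_X(x_n,x_n')-\diam(\Delta)\le\dis(f)$, so in both cases $d_X(x_n,x_n')\le\dis(f)+\diam(\Delta)$; letting $n\to\infty$ yields $\diam(X)-\diam(\Delta)\le\dis(f)$. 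The only remaining bookkeeping is the degenerate situations — a constant $f$ (where the set $\{d_X(x,x'):f(x)\neq f(x')\}$ is empty, so $\delta(f)=+\infty$ and that term simply drops out) and $|\Delta|=1$ — which are absorbed by the same case split under the usual conventions, so I do not expect a real obstacle there; the limiting argument above is the step that needs the most care.
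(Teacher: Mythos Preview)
Your argument is correct. The paper does not actually prove this claim; it simply cites Proposition~1.3 of Ivanov--Tuzhilin (\cite{ivanov2019ultra}) and moves on, so there is no in-paper proof to compare against. What you have written is a clean, self-contained verification: the decomposition of $X\times X$ into collapsed and separated pairs, the identification of the separated-pair supremum with $\max\{\sup S-\diam(\Delta),\ \diam(\Delta)-\delta(f)\}$ for $S=\{d_X(x,x'):f(x)\neq f(x')\}$, and the limiting sequence to force $\diam(X)-\diam(\Delta)\le\dis(f)$ are all sound. The only cosmetic point is that the sign-splitting in the second supremum is slightly more work than needed --- once $S$ is nonempty, $\sup_{d\in S}|d-\diam(\Delta)|=\max\{\sup S-\diam(\Delta),\ \diam(\Delta)-\inf S\}$ follows directly from $|d-a|\le\max\{\sup S-a,\,a-\inf S\}$ for every $d\in S$ --- but this changes nothing substantive. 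Your handling of the degenerate cases (constant $f$, hence $\delta(f)=+\infty$) via the usual supremum conventions is also fine.
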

\begin{proof}
See Proposition 1.3 in \cite{ivanov2019ultra}.
\end{proof}
Let $\displaystyle \phi \defeq \inf_{f:X\to\Delta}\dis(f)$, $\displaystyle \phi_\Delta \defeq \inf_{f(X)=\Delta}\dis(f)$, and $\displaystyle \gamma \defeq \inf_{g:\Delta\to X}\dis(g)$, so in particular $\dGH(X, \Delta) = \frac{\phi_\Delta}{2}$ and $\dmGH(X, \Delta) = \frac{1}{2}\max\{\phi, \gamma\}$.

Case 1: $\phi_\Delta = \phi$. Then $\dGH(X, \Delta) = \frac{\phi}{2} \leq \dmGH(X, \Delta)$ and so $\dGH(X, \Delta) = \dmGH(X, \Delta)$.

Case 2: $\phi_\Delta > \phi$. For an arbitrary $\epsilon \in (0, \phi_\Delta - \phi)$, consider some $f:X\to \Delta$, $g:\Delta \to X$ s.t. $\dis(f) \leq \phi + \epsilon$ and $\dis(g) \leq \gamma + \epsilon$. Notice that $f$ cannot be surjective, and define $\Delta_0 \defeq \Delta \setminus f(X)$. Construct a surjection $f_\Delta:X\to\Delta$ as follows:
\begin{enumerate}
    \item for every $y \in f(X)$, pick a representative $x_y \in f^{-1}(y)$ (by invoking the axiom of choice, if needed), and denote $X_f \defeq \{x_y: y \in f(X)\}$;
    \item consider some $X_0 \subseteq X \setminus X_f$ of cardinality $|X_0| = |\Delta_0|$, and bijection $f_0:X_0\to\Delta_0$;
    \item set $f_\Delta(x) \defeq \begin{cases}f(x) & x \in X \setminus X_0\\ f_0(x) & x \in X_0
    \end{cases}$ so that $f_\Delta(X\setminus X_0) = f(X)$ and $f_\Delta(X_0) = \Delta_0$.
\end{enumerate}
By construction, $f_\Delta^{-1}(y) \subseteq f^{-1}(y) \quad \forall y \in f(X)$ and $|f_\Delta^{-1}(y)| = 1 \quad \forall y \in \Delta_0$, so $$\sup\big\{\diam(f_\Delta^{-1}(y)): y \in f_\Delta(X)\big\} \leq \sup\big\{\diam(f^{-1}(y)): y \in f(X)\big\}.$$ Because $f_\Delta$ is surjective, $\dis(f_\Delta) \geq \phi_\Delta > \dis(f)$, and from Claim \ref{dis simplex},
\begin{equation*}
    \dis(f_\Delta) = \diam(\Delta) - \delta(f_\Delta) > \diam(X) - \diam(\Delta).
    \label{eqn:diam}
    \tag{$**$$*$}
\end{equation*} By definition, $\delta(f_\Delta) \geq 0$, so the first part of \eqref{eqn:diam} entails \begin{align*}\frac{1}{2}\diam(\Delta) &\geq \frac{1}{2}\dis(f_\Delta)\\ &\geq \dGH(X, f_\Delta(X)) \tag{\text{from Claim \ref{GH to image}}}\\&= \dGH(X, \Delta).\end{align*} Because $f$ is not surjective, $f \circ g:\Delta \to \Delta$ is not surjective either, which means that $(f \circ g)(y) = (f \circ g)(y')$ for some distinct $y, y' \in \Delta$, and so $ \diam(\Delta) \leq \dis(f \circ g)$. Then \begin{align*}\dGH(X, \Delta) &\leq \frac{1}{2}\diam(\Delta)\\ &\leq \frac{1}{2}\dis(f \circ g)\\ &\leq \frac{1}{2}(\dis(f) + \dis(g))\\ &\leq \max\{\phi, \gamma\} + \epsilon \\ &= 2\dmGH(X, \Delta) + \epsilon,
\end{align*} and choosing arbitrarily small $\epsilon$ yields $\dGH(X, \Delta) \leq 2\dmGH(X, \Delta)$.

The second part of \eqref{eqn:diam} implies $\diam(\Delta) > \frac{1}{2}\diam(X)$, so $\diam(\Delta) \leq \frac{1}{2}\diam(X)$ is only possible when $\phi_\Delta = \phi$ and, therefore, $\dGH(X, \Delta) = \dmGH(X, \Delta)$.

\subsubsection{The special case of $|X|=|\Delta|<\infty$ and $\diam(\Delta) > \frac{1}{2}\diam(X)$} Let $\displaystyle \gamma \defeq \inf_{g:\Delta\to X}\dis(g)$ and $\displaystyle \phi_\Delta \defeq \inf_{f(X)=\Delta}\dis(f)$. Trivially, any non-injective $g:\Delta \to X$ has $\dis(g) \geq \diam(\Delta)$. At the same time, $\diam(X) < 2\diam(\Delta)$ entails $$\diam(\Delta) = \max\big\{\diam(\Delta), |\diam(X) - \diam(\Delta)|\big\} \geq \dis(g) \quad \forall g:\Delta \to X,$$ so $\gamma$ must be equal to the infimum of distortions of injective mappings only. Notice that $|X| = |\Delta| < \infty$ means that a mapping in either direction is injective if and only if it is surjective, and recall that any injective $g$ has $\dis(g) = \dis(g^{-1})$. It follows that $\gamma = \phi_\Delta$, which implies $\dmGH(X, \Delta) \geq \frac{\phi_\Delta}{2} = \dGH(X, \Delta)$ and therefore $\dGH(X, \Delta) = \dmGH(X, \Delta)$.

\begin{remark}
The above result, however, does not need to hold when $|X| = |\Delta| \geq |\mathbb{N}|$. Denoting a regular simplex of diameter $\lambda > 0$ as $\Delta_\lambda$, consider $X = \Delta_{\nicefrac{1}{2}} \overset{\nicefrac{1}{4}}{\sqcup} \{x_0\}$ and $\Delta = \Delta_1$ s.t. $|\Delta_{\nicefrac{1}{2}}| = |\Delta_1| \geq |\mathbb{N}|$ (notice that $\diam(\Delta) > \frac{1}{2}\diam(X)$). Furthermore, consider $f:X\to \Delta$ s.t. $f(X) = \{y\}$ for some $y \in \Delta$, and some injective $g:\Delta\to\Delta_{\nicefrac{1}{2}}\subset X$. Trivially, $\dis(f) = \dis(g) = \frac{1}{2}$, and therefore $\dmGH(X, \Delta) \leq \frac{1}{4}$. On the other hand, $\dis(f_\Delta) \geq \frac{3}{4}$ for any surjective $f_\Delta:X\to\Delta$ because there must exist $x \in X$ s.t. $f_\Delta(x) \neq f_\Delta(x_0)$, and by construction $d_X(x, x_0) = \frac{1}{4}$. It follows that $\dGH(X, \Delta) \geq \frac{3}{8} > \frac{1}{4} \geq \dmGH(X, \Delta)$.
\end{remark}

\subsubsection{Tightness of $\dGH(X, \Delta) \leq 2\dmGH(X, \Delta)$}
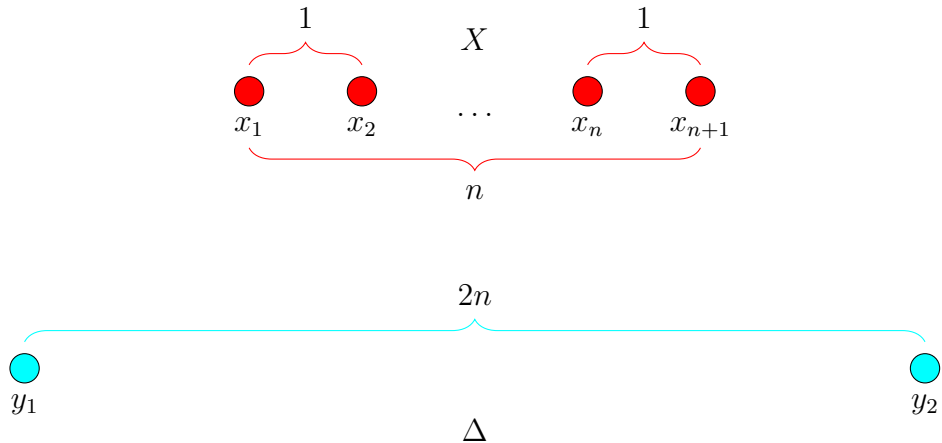
\begin{figure}[!b]
    \centering
    \caption{An example of $X$ and $\Delta$ for which $\frac{\dGH(X, \Delta)}{\dmGH(X, \Delta)}$ can be made arbitrarily close to 2. Numbers by the braces indicate distances between points.}
    \label{fig:tight_bound}
    \vspace{.5cm}
    \begin{tikzpicture}
\node[circle,draw,fill=Xcolor,label=below:{$x_1$},node distance=1.5cm] (X1) at (0,0) {};
\node[circle,draw,fill=Xcolor,right of=X1,label=below:{$x_2$},node distance=1.5cm] (X2) {};
\node[right of=X2,node distance=1.5cm,label=below:{$\ldots$}] (dots) {};
\node[circle,draw,fill=Xcolor,right of=X2,node distance=3cm,label=below:{$x_n$}] (Xn) {};
\node[circle,draw,fill=Xcolor,right of=Xn,label=below:{$x_{n+1}$},node distance=1.5cm] (Xn+1) {};
\node[above of=dots,node distance=.7cm]{$X$};
\draw [Xcolor,decorate,decoration={brace, amplitude=3mm, raise=1.5mm}] (X1.north) -- (X2.north) node[black,pos=0.5, above=5mm]{1};
\draw [Xcolor,decorate,decoration={brace, amplitude=3mm, raise=1.5mm}] (Xn.north) -- (Xn+1.north) node[black,pos=0.5, above=5mm]{1};
\draw [Xcolor,decorate,decoration={brace, amplitude=3mm, mirror, raise=5.5mm}] (X1.south) -- (Xn+1.south)  node[black,pos=0.5, below=9mm]{$n$};

\node[circle,draw,fill=Ycolor,below left=3.4cm and 2.7cm of X1,label=below:{$y_1$}] (Y1) {};
\node[circle,draw,fill=Ycolor,below right=3.4cm and 2.7cm of Xn+1,label=below:{$y_2$}] (Y2) {};
\draw [Ycolor,decorate,decoration={brace, amplitude=3mm, raise=1.5mm}] (Y1.north) -- (Y2.north) node[black,pos=0.5, above=5mm](2n){$2n$};
\node[below of=2n,node distance=1.8cm]{$\Delta$};

\end{tikzpicture}
\end{figure}

To see that the obtained bound is in fact tight, consider the following example, illustrated by Figure \ref{fig:tight_bound}. Let $X = \{x_1, \ldots, x_{n+1}\}$ with $d_X(x_k, x_j) = |k-j|$ and $\Delta = \{y_1, y_2\}$ with $d_\Delta(y_1, y_2) = 2n$. For any surjective $f:X\to\Delta$ there exists some $k = 1, \ldots, n$ s.t. $f(x_k) \neq f(x_{k+1})$, and therefore $\delta(f) = 1$. Using the results of Tuzhilin and Ivanov, \begin{align*}\dGH(X, \Delta) &= \frac{1}{2}\inf_{f(X) = \Delta}\dis(f)\\ &\geq \frac{1}{2}\inf_{f(X)=\Delta} (\diam(\Delta) - \delta(f)) \tag{from Claim \ref{dis simplex}}\\ &= n-\frac{1}{2}.\end{align*} Now, consider $f:X\to\Delta$ s.t. $f(X) = \{y_1\}$ and $g:\Delta\to X$ s.t. $g(y_1) = x_1$, $g(y_2) = x_{n+1}$. Trivially, $\dis(f) = \diam(X) = n$ and $\dis(g) = \diam(\Delta) - \diam(X) = n$, so $\dmGH(X, \Delta) \leq \frac{n}{2}$ and therefore $\frac{\dGH(X, \Delta)}{\dmGH(X, \Delta)} \geq 2 - \frac{1}{n}.$ It follows that $$\lim_{n \to \infty}\frac{\dGH(X, \Delta)}{\dmGH(X, \Delta)} = 2.$$



\bibliographystyle{alpha}
\bibliography{references/references.bib}

\end{document}